\newtheorem{theorem}{Theorem}[section]
\newtheorem{lemma}[theorem]{Lemma}
\theoremstyle{definition}
\theoremstyle{remark}
\newtheorem{remark}{Remark}
\numberwithin{equation}{section}
\newcommand{\real}{\mathbb R}
\def\natu{\mathbb N}
\def\landan{\Lambda_n}
\def\landa0{\Lambda_0}
\def\landaene{\lambda_{2n-1}}
\def\periodica{L_{T}(\real,\real)}
\def\landaenea{\lambda_n (a)}
\begin{document}

\title[Lyapunov inequalities for the periodic bvp]
{Lyapunov inequalities for the periodic boundary value problem at higher eigenvalues}%
\author{Antonio Ca\~{n}ada}
\author{Salvador Villegas}
\thanks{The authors have been supported by the Ministry of Education and
Science of Spain (MTM2008.00988)}
\address{Departamento de An\'{a}lisis
Matem\'{a}tico, Universidad de Granada, 18071 Granada, Spain.}
\email{acanada@ugr.es, svillega@ugr.es}

\subjclass[2000]{34B15, 34B05}%
\keywords{Lyapunov inequality, periodic boundary value problems,
resonance, stability, higher eigenvalues.}%

%\date{}%
%\dedicatory{}%
%\commby{}%
% ----------------------------------------------------------------
\begin{abstract}
This paper is devoted to provide some new results on Lyapunov type
inequalities for the periodic boundary value problem at higher
eigenvalues. Our main result is derived from a detailed analysis
on the number and distribution of zeros of nontrivial solutions
and their first derivatives, together with the study of some
special minimization problems, where the Lagrange multiplier
Theorem plays a fundamental role. This allows to obtain the
optimal constants. Our applications include the Hill's equation
where we give some new conditions on its stability properties and
also the study of periodic and nonlinear problems at resonance
where we show some new conditions which allow to prove the
existence and uniqueness of solutions.
\end{abstract}

\maketitle

\section{Introduction}

\noindent Lyapunov type inequalities provide optimal necessary
conditions for certain linear homogeneous boundary value problems
to have nontrivial solutions. For instance, if the function $a$
satisfies
\begin{equation}\label{condiciones}
a \in \periodica \setminus \{ 0 \}, \ \int_0^T  a(x) \ dx \geq 0
\end{equation}
where $L_{T}(\real,\real)$ denotes the set of $T-$periodic
functions $a: \mathbb{R} \rightarrow \mathbb{R},$ such that
$a|_{[0,T]} \in L^1 (0,T),$ then it may be proved (see
\cite{huaizhongyongjde94}) that if the periodic boundary value
problem
\begin{equation}\label{p1}
u''(x) + a(x)u(x) = 0, \ x \in (0,T), \ u(0)-u(T) = u'(0)- u'(T) =
0
\end{equation}
has nontrivial solutions, then
\begin{equation}\label{p2bis}
\displaystyle \int_{0}^{T} a^+ (x) \ dx
> 16/T
\end{equation}
where $a^{+}(x) = \max \{ a(x),0 \}.$ This fact has a trivial
consequence: if $a$ satisfies (\ref{condiciones}) and
$\displaystyle \int_0^T a^+ (x) \ dx  \leq 16/T,$ then the unique
solution of (\ref{p1}) is the trivial one.  Moreover, this
constant is optimal: for any constant $k > 16/T,$ there is some
function $a$ satisfying (\ref{condiciones}) and $ \displaystyle
\int_0^T a^+ (x) \ dx  \leq k,$ such that (\ref{p1}) has
nontrivial solutions (\cite{huaizhongyongjde94}).

This kind of result has been generalized in different ways. For
example, in \cite{camovimia}, \cite{cavidcds} and \cite{zhang},
the authors provide, for each $p$ with $1 \leq p \leq \infty,$
optimal necessary conditions for boundary value problems similar
to (\ref{p1}) to have nontrivial solutions, given in terms of the
$L^p$ norm of the function $a^+.$ This includes the case of
Dirichlet, Neumann or mixed boundary conditions. Also, some
Lyapunov inequalities may be obtained for $q-$Laplacian operators
(\cite{zhang}) and for elliptic PDE (\cite{camovijfa}).

Let us observe that the real number zero plays a fundamental role
in the condition (\ref{condiciones}). The number zero is,
precisely, the first (or principal) eigenvalue of the eigenvalue
problem
\begin{equation}\label{p2}
 u''(x) + \lambda u(x) = 0, \ x \in (0,T), \ u(0)-u(T) = u'(0)- u'(T) =
0 \end{equation} and if $a \in \periodica,$ a sufficient condition
to get (\ref{condiciones}) is the condition $0 \prec a$ where
where for $c,d \in L^1 (0,T),$ we write $c \prec d$ if $c(x) \leq
d(x)$ for a.e. $x \in [0,T]$ and $c(x) < d(x)$ on a set of
positive measure.

On the other hand, the set of eigenvalues of (\ref{p2}) is given
by $\lambda_0 = 0, \ \lambda_{2n-1} = \lambda_{2n}= (2n)^2
\pi^2/T^2, \ n \in \natu$ and it is clear that if for some
sufficiently large $n \in \natu,$ the function $a$ satisfies
$\lambda_{2n-1} \prec a,$ then the inequality $\int_0 ^T a^+ (x) \
dx \leq \frac{16}{T}$ is not possible. To this respect, the first
part of this paper deals with $L_1-$Lyapunov inequality for the
periodic problem (\ref{p1}) at higher eigenvalues. More precisely,
if $n \in \natu$ is fixed, we introduce the set $\landan$ as
\begin{equation}
\landan = \{ a \in \periodica: \lambda_{2n-1} \prec a \ \mbox{and}
\ (\ref{p1}) \ \mbox{has nontrivial solutions} \ \}
\end{equation}
and we give an explicit expression for the number
\begin{equation}
\gamma_{1,n} = \inf_{a \in \landan } \ \Vert a  \Vert_{L^1 (0,T)}
\end{equation} In addition, we prove that this infimum is not attained. To the best of our
knowledge these results are new if $n \geq 1$. In Remarks
\ref{010609t1}, \ref{1212073} and \ref{1604093} below we compare
our results with others obtained by different authors.

Our main result is derived from a detailed analysis on the number
and distribution of zeros of nontrivial solutions and their first
derivatives, together with the study of some special minimization
problems. We apply our method to the periodic and also to the
antiperiodic boundary value problem.

In the last section of the paper we give some applications. In
particular, we obtain in a very easy way a generalization of a
result previously proved by Krein (\cite{krein}), on the so called
$n^{th}$ stability zone of the Hill's equation. Also, we compare
our results with those obtained by Borg in \cite{borg}, about the
absolute stability of the Hill's equation with two parameters.
Moreover, we use an especial continuation method to prove the
positivity of some eigenvalues; this provides some new stability
results (see Subsection \ref{subsection1} below).

Finally, we present some new conditions which allow to prove the
existence and uniqueness of solutions for some nonlinear periodic
and resonant problems.

\section{The periodic problem}

If $n \in \natu $ is fixed, we introduce the set $\landan$ as

\begin{equation}\label{p2806073}
\landan = \{ a \in \periodica : \lambda_{2n-1} \prec a \
\mbox{and} \ (\ref{p1}) \ \mbox{has nontrivial solutions} \ \}
\end{equation}
(Remember that $\lambda_{2n-1}= \lambda_{2n} = 4n^2\pi^2/T^2$). If
$a \in \landan,$ and $u$ is any nontrivial solution of (\ref{p1}),
then $u$ is not a constant function. In addition, $u$ must have a
zero in the interval $[0,T].$  If $r \in [0,T]$ is such that $u(r)
=0,$ the periodic and nontrivial function $v(x) = u(r+x)$
satisfies $v''(x) + a(r +x)v(x) = 0, \ x \in (0,T)$ and $\Vert a(r
+ \cdot) - \landaene \Vert_{L^1 (0,T)}= \Vert a(\cdot) - \landaene
\Vert_{L^1 (0,T)}.$ Finally, since $a \in \landan,\ n \in \natu,$
it is clear that between two consecutive zeros of the function $u$
there must exists a zero of the function $u'$ and between two
consecutive zeros of the function $u'$ there must exists a zero of
the function $u.$

Previously to state and prove the main result in this section, we
remember that for any function $a \in \periodica,$ the eigenvalues
for
\begin{equation}\label{p2bisbis}
 u''(x) + (\lambda+a(x)) u(x) = 0, \ x \in (0,T), \ u(0)-u(T) = u'(0)- u'(T) =
0 \end{equation} form a sequence $\landaenea, \ n \in \natu \cup
\{ 0 \},$ such that
\begin{equation}\label{1604091}
\lambda_0 (a) < \lambda_1 (a) \leq \lambda_2 (a)< \ldots <
\lambda_{2n-1}(a) \leq \lambda_{2n}(a) < \ldots
\end{equation}
with $\lambda_0(a)$ simple and such that if $\phi_n$ is the
corresponding eigenfunction to $\lambda_n(a),$ then $\phi_0$ has
no zeros in $[0,T]$ and $\phi_{2n-1}$ and $\phi_{2n}$ have exactly
$2n$ zeros in $[0,T)$ (see \cite{cole}). In particular, $\lambda_0
= \lambda_0 (0) = 0, \ \lambda_{2n-1} = \lambda_{2n}=
\lambda_{2n-1}(0) = \lambda_{2n}(0)= (2n)^2 \pi^2/T^2, \ n \in
\natu.$

\begin{theorem}\label{pt1}
Let $n \in \natu$ and $a \in \landan$ be given and $u$ any
nontrivial solution of (\ref{p1}) such that $u(0) = u(T) = 0.$ If
the zeros of $u$ in $[0,T]$ are denoted by $0 = x_0 < x_2 < \ldots
< x_{2m} = T$ and the zeros of $u'$ in $(0,T)$ are denoted by $
x_1 < x_3 < \ldots < x_{2m-1}, $ then:
\begin{enumerate}
\item $x_{i+1} - x_{i} \leq \frac{T}{4n}, \ \forall \ i: \ 0 \leq
i \leq 2m-1.$ Moreover, at least one of these inequalities is
strict. \item $m$ is an even number and $m \geq 2(n+1).$ Any even
value $m\geq 2(n+1)$ is possible. %
\item
\begin{equation}\label{02071}
\Vert a - \landaene \Vert_{L^1 (x_i,x_{i+1})} \geq \frac{2n\pi}{T}
\cot (\frac{2n\pi}{T}(x_{i+1}-x_i)),\ 0 \leq i \leq 2m-1.
\end{equation}
\item
\begin{equation}\label{p1009071}
\beta_{1,n} \equiv \inf_{a \in \landan } \ \Vert a -
\lambda_{2n-1} \Vert_{L^1 (0,T)} = \frac{8\pi n (n+1)}{T} \cot
\frac{n\pi}{2(n+1)}
\end{equation}
and $\beta_{1,n}$ is not attained.%
\item If $a \in \periodica$ satisfies
\begin{equation}\label{positivo}
\lambda_{2n-1} \prec a, \ \Vert a \Vert_{L^1(0,T)} \leq
\gamma_{1,n} \end{equation} where \begin{equation}\label{0909092}
\gamma_{1,n} = T\lambda_{2n-1} + \beta_{1,n},
\end{equation}
then
\begin{equation}\label{1604096}
\lambda_{2n}(a) < 0 < \lambda_{2n+1}(a)
\end{equation}
\end{enumerate}
\end{theorem}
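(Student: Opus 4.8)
The plan is to establish the five assertions in sequence, using Sturm comparison against the constant potential $\mu:=\lambda_{2n-1}=(2n\pi/T)^2$ for the geometric statements (1)--(2), a Riccati substitution for the local inequality (3), a convexity/summation argument for the global infimum (4), and a continuation argument feeding on (4) for the spectral location (5). For (1), I would compare $u$ on a subinterval $(x_i,x_{i+1})$ (on which $u$ runs monotonically from a zero to a critical point, say $u>0$, $u'>0$) with the trigonometric solution $v(x)=\sin(\sqrt{\mu}\,(x-x_i))$ of $v''+\mu v=0$, whose critical points are spaced $T/(4n)$ apart. The Wronskian $w=u'v-uv'$ has $w'=(\mu-a)uv\le 0$ because $\lambda_{2n-1}\prec a$ and $w(x_i)=0$, so $w\le 0$; were $x_{i+1}-x_i>T/(4n)$, then at $x_i+T/(4n)$ one would have $v'=0$, $v>0$, $u'>0$, giving $w=u'v>0$, a contradiction, whence $x_{i+1}-x_i\le T/(4n)$. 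If all these were equalities, $u$ would coincide with a trigonometric solution on every subinterval, forcing $a=\lambda_{2n-1}$ a.e. and contradicting $\lambda_{2n-1}\prec a$; so at least one is strict. For (2), I would first record the elementary fact, from strict monotonicity of the periodic eigenvalues in the potential, that $\lambda_{2n}(a)<\lambda_{2n}(\lambda_{2n-1})=\lambda_{2n}-\lambda_{2n-1}=0$. Since $u$ is a nontrivial periodic solution, $0=\lambda_j(a)$ with $u=\phi_j$, and $\lambda_{2n}(a)<0$ forces $j\ge 2n+1$, so by the nodal count recalled before the theorem $u$ has at least $2(n+1)$ zeros in $[0,T)$; as the number of zeros of any periodic eigenfunction is even, $m\ge 2(n+1)$ with $m$ even. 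Realizability of every even $m\ge 2(n+1)$ I would defer to the construction used in (4).

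For (3), I would normalize the subinterval to $(0,\ell)$ with $u(0)=0$, $u'(\ell)=0$, $u,u'>0$ on $(0,\ell)$ (the reversed configuration being symmetric), and set $b=a-\lambda_{2n-1}\ge 0$. With the Riccati variables $w=u'/u$ and $w_0=\sqrt{\mu}\cot(\sqrt{\mu}\,x)$, which satisfy $w'=-w^2-\mu-b$ and $w_0'=-w_0^2-\mu$, the difference $z=w-w_0$ obeys $z'=-(w+w_0)z-b$ with $z(0^+)=0$ and $z(\ell)=-\sqrt{\mu}\cot(\sqrt{\mu}\,\ell)$. Integrating yields the identity $\int_0^\ell b=\sqrt{\mu}\cot(\sqrt{\mu}\,\ell)-\int_0^\ell(w+w_0)z$. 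Since $b\ge 0$ forces $w\le w_0$ (so $z\le 0$) while $w,w_0\ge 0$ on $(0,\ell]$, the last integral is $\le 0$, giving (3), with equality only when $b\equiv 0$.

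For (4), summing (3) over the $2m$ subintervals gives $\Vert a-\lambda_{2n-1}\Vert_{L^1(0,T)}\ge\sum_i\sqrt{\mu}\cot(\sqrt{\mu}\,\ell_i)$ with $\sum_i\ell_i=T$ and $0<\ell_i\le T/(4n)$. Since $\ell\mapsto\cot(\sqrt{\mu}\,\ell)$ is convex on $(0,T/(4n))$, Jensen bounds the right side below by $2m\sqrt{\mu}\cot(\sqrt{\mu}\,T/(2m))=\tfrac{4n^2\pi^2}{T}\,h(n\pi/m)$ with $h(t)=\cot(t)/t$; as $h$ is decreasing, this is increasing in $m$, so the minimum over even $m\ge 2(n+1)$ is attained at $m=2(n+1)$ and equals $\tfrac{8\pi n(n+1)}{T}\cot\tfrac{n\pi}{2(n+1)}=\beta_{1,n}$. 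For the matching upper bound I would build a minimizing sequence with $2(n+1)$ equally spaced zeros in which the excess $b$ is concentrated just before each critical point: on one subinterval a single point mass $c\,\delta_s$ gives $c=\sqrt{\mu}\bigl(\cot(\sqrt{\mu}\,s)-\tan(\sqrt{\mu}\,(\ell-s))\bigr)\to\sqrt{\mu}\cot(\sqrt{\mu}\,\ell)$ as $s\uparrow\ell$, so suitably regularized and periodically arranged such potentials lie in $\landan$ with $L^1$-distance tending to $\beta_{1,n}$; the same family realizes every even $m\ge 2(n+1)$, settling that clause of (2). Non-attainment follows from the equality case of (3): reaching $\beta_{1,n}$ would force $b\equiv 0$, i.e. $a=\lambda_{2n-1}\notin\landan$. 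I expect this construction---producing genuine $T$-periodic solutions that close up around the circle while controlling the total mass---to be the main obstacle, since the lower bounds are essentially mechanical once (3) is available.

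Finally, for (5) the inequality $\lambda_{2n}(a)<0$ is exactly the monotonicity fact recorded above. For $0<\lambda_{2n+1}(a)$ I would argue by contradiction: if $\lambda_{2n+1}(a)\le 0$, set $a_s=\lambda_{2n-1}+s(a-\lambda_{2n-1})$; then $s\mapsto\lambda_{2n+1}(a_s)$ is continuous and strictly decreasing from $\lambda_{2n+1}-\lambda_{2n-1}>0$ at $s=0$ to $\lambda_{2n+1}(a)\le 0$ at $s=1$, so $\lambda_{2n+1}(a_{s^\ast})=0$ for some $s^\ast\in(0,1]$. Thus $\hat a=a_{s^\ast}$ satisfies $\lambda_{2n-1}\prec\hat a$ and has a nontrivial periodic solution, i.e. $\hat a\in\landan$, so by (4) and its non-attainment $\Vert\hat a-\lambda_{2n-1}\Vert_{L^1}>\beta_{1,n}$. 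On the other hand $\Vert\hat a-\lambda_{2n-1}\Vert_{L^1}=s^\ast\Vert a-\lambda_{2n-1}\Vert_{L^1}\le\Vert a\Vert_{L^1}-T\lambda_{2n-1}\le\gamma_{1,n}-T\lambda_{2n-1}=\beta_{1,n}$, a contradiction. Hence $\lambda_{2n+1}(a)>0$, which together with $\lambda_{2n}(a)<0$ completes (5).
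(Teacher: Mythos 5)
Your proposal is correct, and although its global architecture (local estimates on each nodal subinterval, summation, an explicit minimizing family, continuation for the spectral statement) matches the paper's, the key technical tools are genuinely different. For parts (1) and (3) you work with ODE comparison: a Wronskian argument against $\sin(\sqrt{\mu}(x-x_i))$ for the length bound, and a Riccati comparison $z'=-(w+w_0)z-b$ for the local inequality (\ref{02071}); the paper instead tests against the principal eigenfunction of the mixed problem on $(x_i,x_{i+1})$ (identity (\ref{2806078})) and imports the variational Lemma \ref{l2} from the companion JEMS paper. Your Riccati route is self-contained and buys a visibly cleaner non-attainment proof: equality in (\ref{02071}) forces $b\equiv 0$ on the subinterval, so attainment of $\beta_{1,n}$ would give $a=\lambda_{2n-1}$ a.e., contradicting $\lambda_{2n-1}\prec a$; the paper (Lemma \ref{l6}) must instead invoke the equality case of Lemma \ref{l2} and observe that the resulting sine arcs violate $u'(x_1)=0$. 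In part (2) you obtain evenness and $m\geq 2(n+1)$ in one stroke from spectral theory ($\lambda_{2n}(a)<0$ plus the nodal count of periodic eigenfunctions), where the paper argues elementarily: $m>2n$ from part (1) via (\ref{1307091}), and evenness by tracking the signs of $(u,u')$ at the nodes around the period. Your route needs one standard fact worth stating explicitly: when $0$ is a \emph{double} periodic eigenvalue, every nontrivial solution (not just a chosen basis of eigenfunctions) carries the eigenfunction nodal count --- this follows from Sturm separation applied on the circle. Also, for the realizability clause of (2), the paper's choice $a\equiv\lambda_q$, $u=\sin (q\pi x/T)$ is immediate, whereas deferring to your point-mass family is roundabout. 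In part (4) your Jensen/convexity step carries the same content as the equal-spacing lemma the paper cites, and your minimizing family (regularized point masses placed just before the critical points) is in the same spirit as the paper's explicit construction (Lemma \ref{l7}: a cubic correction of the sine profile near its critical points, reflected and extended $\frac{T}{n+1}$-periodically); you correctly flag the regularization/gluing as the main remaining work, and Lemma \ref{l7} shows it can be carried out. Part (5) is the paper's continuation argument in contrapositive (intermediate value) form; both versions hinge on (4) together with non-attainment to exclude a zero periodic eigenvalue along the homotopy $a_s=\lambda_{2n-1}+s(a-\lambda_{2n-1})$.
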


\begin{proof}

Let $i, \ 0 \leq i \leq 2m-1,$ be given. Then, function $u$
satisfies either the problem \begin{equation}\label{2806076}
u''(x) + a(x)u(x) = 0, \ x \in (x_i,x_{i+1}), \ \ u(x_i) = 0, \
u'(x_{i+1}) = 0, \end{equation} or the problem
\begin{equation}\label{2806077} u''(x) + a(x)u(x) = 0, \ x \in (x_i,x_{i+1}),
\ \ u'(x_i) = 0, \ u(x_{i+1}) = 0. \end{equation} Let us assume
the first case. The reasoning in the second case is similar. Note
that $u$ may be chosen such that $u(x)
> 0, \ \forall \ x  \in  (x_i,x_{i+1}).$ Let us denote by $\mu_1 ^i$ and
$\varphi_1^i,$ respectively, the principal eigenvalue and
eigenfunction of the eigenvalue problem
\begin{equation}\label{2806074} v''(x) + \mu v(x) = 0, \ x \in
(x_i,x_{i+1}), \ v(x_i) = 0, \ v'(x_{i+1}) = 0. \end{equation} It
is known that
\begin{equation}\label{2806075}
\mu_1 ^i = \frac{\pi^2}{4(x_{i+1}-x_i)^2}, \ \varphi_1 ^i (x) =
\sin \frac{\pi (x-x_i)}{2(x_{i+1}-x_i)} \end{equation} Choosing
$\varphi_1 ^i$ as test function in the weak formulation of
(\ref{2806076}) and $u$ as test function in the weak formulation
of (\ref{2806074}) for $\mu = \mu_1 ^i$ and $v =\varphi_1^i,$ we
obtain
\begin{equation}\label{2806078}
\int_{x_i}^{x_{i+1}} (a(x) - \mu_1^i)u\varphi_1^i (x) \ dx = 0.
\end{equation}
Then, if $x_{i+1} - x_{i} > \frac{T}{4n},$ we have
$$
\mu_1 ^i = \frac{\pi^2 T^2}{4(x_{i+1}-x_i)^2T^2} < \frac{4n^2
\pi^2}{T^2} = \landaene \leq a(x), \ \mbox{a.e. in} \
(x_i,x_{i+1})
$$
which is a contradiction with (\ref{2806078}). Consequently,
$x_{i+1} - x_i \leq \frac{T}{4n}, \forall \ i: \ 0 \leq i \leq
2m-1.$ Also, since $\landaene \prec a$ in the interval $(0,T)$, we
must have $\landaene \prec a$ in some subinterval $(x_j,x_{j+1})$.
If $x_{j+1} - x_{j}= \frac{T}{4n},$ it follows $\mu_1 ^j \prec a$
in $(x_j,x_{j+1})$ and this is again a contradiction with
(\ref{2806078}). These reasonings complete the first part of the
theorem.

For the second one, let us observe that
\begin{equation}\label{1307091}
T = \sum_{i=0}^{2m-1} (x_{i+1}-x_i) < 2m \frac{T}{4n}
\end{equation}
In consequence, $m >2n$ and therefore $m \geq 2n+1.$ At this
point, we claim that, for the periodic problem (\ref{p1}), $m$
must be an even number. This implies $m \geq 2(n+1).$

To prove the claim, let us observe that $u(x_0) = 0$ and $u'(x_0)
\neq 0.$ Assume, for instance that $u'(x_0) > 0.$ Then, we have
$$
\begin{array}{c}
u(x_0) = 0, \ u'(x_0) > 0; \ u(x_1) > 0, \ u'(x_1) = 0, \\
u(x_2) = 0, \ u'(x_2) < 0; \ u(x_3) < 0, \ u'(x_3) = 0, \\
u(x_4) = 0, \ u'(x_4) > 0; \ u(x_5) > 0, \ u'(x_5) = 0, \\
\ldots
\end{array}
$$
Since $u'(x_0)u'(x_{2m})= u'(0)u'(T) > 0,$ $m$ must be an even
number. Also, note that for any given even and natural number $q
\geq 2(n+1),$ function $b(x) \equiv \lambda_{q}$ belongs to
$\landan$ and for function $v(x) = \sin \frac{q\pi x}{T},$ we have
$m = q.$ In this way, we have proved the first two parts of the
Theorem.

$ $

Continuing with the proof of the Theorem, if $i,$ with $ \ 0 \leq
i \leq 2m-1$ is given and $u$ satisfies (\ref{2806076}), then
$$
\begin{array}{c}
\int_{x_i}^{x_{i+1}} u'^2(x) = \int_{x_i}^{x_{i+1}} a(x)u^2(x) =
\\ \\
\int_{x_i}^{x_{i+1}} (a(x)-\landaene) u^2 (x) +
\int_{x_i}^{x_{i+1}} \landaene u^2 (x)
\end{array}
$$
Therefore, $$ \int_{x_i}^{x_{i+1}} u'^2(x) - \landaene
\int_{x_i}^{x_{i+1}} u^2(x) \leq \Vert a - \landaene \Vert_{L^1
(x_i,x_{i+1})} \Vert u^2 \Vert_{L^\infty (x_i,x_{i+1})}
$$
Since $u'$ has no zeros in the interval $(x_i,x_{i+1})$ and
$u(x_i) = 0,$ we have $\Vert u^2 \Vert_{L^\infty(x_i,x_{i+1})} =
u^2 (x_{i+1}).$ This proves
\begin{equation}\label{202071}
\Vert a - \landaene \Vert_{L^1 (x_i,x_{i+1})} \geq
\frac{\int_{x_i}^{x_{i+1}} u'^2 - \landaene \int_{x_i}^{x_{i+1}}
u^2 }{u^2(x_{i+1})}.
\end{equation}
At this point, the following Lemma (\cite{cavijems}, Lemma 2.3.)
may be useful.

\begin{lemma}\label{l2}
Assume that $a < b$ and $0 < M \leq \frac{\pi^2}{4(b-a)^2}$ are
given real numbers. Let $H = \{ u \in H^1 (a,b) : u(a) = 0, u(b)
\neq 0 \}.$ If $J: H \rightarrow \real$ is defined by
\begin{equation}\label{02073}
J(u) = \frac{\int_{a}^{b} u'^2 - M \int_{a}^{b} u^2 }{u^2(b)}
\end{equation}
and $c \equiv \inf_{u \in H} \ J(u),$ then $c$ is attained.
Moreover
\begin{equation}
c = M^{1/2}\cot (M^{1/2}(b-a))
\end{equation}
and if $u \in H,$ then $J(u) = c \Longleftrightarrow u(x) = k
\frac{\sin (M^{1/2}(x-a))}{\sin (M^{1/2}(b-a))}$ for some non zero
constant $k.$
\end{lemma}

By using this Lemma in (\ref{202071}) with $a= x_i, b= x_{i+1}, M=
\lambda_{2n-1}, $ we deduce the third part of the Theorem. In
particular, this implies
\begin{equation}\label{2705091}
\Vert a - \landaene \Vert_{L^1 (0,T)} \geq \frac{2n\pi}{T}
\sum_{i=0}^{2m-1} \cot (\frac{2n\pi}{T} (x_{i+1} -x_i))
\end{equation}
The right-hand side of (\ref{2705091}) attains its minimum if and
only if $x_{i+1}-x_i = \frac{T}{2m}, \ 0 \leq i \leq 2m-1$ (see
Lemma 2.5 in \cite{cavijems}) so that
\begin{equation}\label{2805091} \beta_{1,n} \geq \frac{4n\pi}{T}m
\cot \frac{n\pi}{m}
\end{equation}
Taking into account that the function $ m \cot \frac{n\pi}{m}$ is
strictly increasing with respect to $ m,$ and that $m \geq
2(n+1),$ we deduce
\begin{equation}\label{03071}
\beta_{1,n}  \ \geq \frac{8\pi n(n+1)}{T} \cot
\frac{n\pi}{2(n+1)}.
\end{equation}

In the next Lemma, we define a minimizing sequence for
$\beta_{1,n}.$ To this respect, we construct, for each positive
and sufficiently small number $\varepsilon,$ an appropriate
$T-$periodic function $u_{\varepsilon}\in C^2 [0,T]$ in the
following way:
\begin{enumerate}
\item $u_{\varepsilon} $ is first explicitly defined in
$[0,\varepsilon]$ as
$$
u_\varepsilon (x) = -\sin (\frac{2n\pi}{T}(x-\frac{T}{4(n+1)})) +
h(x,\varepsilon)$$ where $h(x,\varepsilon)$ is such that
$a_{\varepsilon} (x) = \frac{-u_{\varepsilon} ''
(x)}{u_{\varepsilon}(x)} > \lambda_{2n-1}, \ \forall \ x \in
[0,\varepsilon].$ \item $u_\varepsilon (x) = -\sin
(\frac{2n\pi}{T}(x-\frac{T}{4(n+1)})), \ \forall \ x \in
(\varepsilon,\frac{T}{4(n+1)}].$ Then, $a_{\varepsilon} (x) =
\frac{-u_{\varepsilon} '' (x)}{u_{\varepsilon}(x)} \equiv
\lambda_{2n-1}, \ \mbox{in the interval}\
(\varepsilon,\frac{T}{4(n+1)}].$ \item
$$ \liminf_{\varepsilon \rightarrow
0^+} \ \Vert a_\varepsilon - \landaene \Vert_{L^1 (0,
\frac{T}{4(n+1)})} = \frac{2n\pi}{T} \cot \frac{n\pi}{2(n+1)}
$$
\item In the interval $[0,\frac{2T}{4(n+1)}],$ the function
$u_{\varepsilon}$ is an odd function with respect to
$\frac{T}{4(n+1)}.$ \item In the interval $[0,\frac{4T}{4(n+1)}],$
the function $u_{\varepsilon}$ is an even function with respect to
$\frac{2T}{4(n+1)}.$ \item The function $u_{\varepsilon}$ is a
periodic function with period $\frac{T}{(n+1)}.$
\end{enumerate}

The details are given in the next Lemma.

\begin{lemma}\label{l7}
Let $ \varepsilon
> 0$ be sufficiently small. Let us define the function
$u_\varepsilon : [0,T] \rightarrow \real$ by
\begin{equation}
u_\varepsilon (x) = \left \{
\begin{array}{l}
-\sin (\frac{2n\pi}{T}(x-\frac{T}{4(n+1)})) +
\frac{2n\pi}{T}\frac{(x-\varepsilon)^3}{3\varepsilon^2}\cos
(\frac{n\pi}{2(n+1)}), \ \ \mbox{if}  \ \ 0 \leq x \leq \varepsilon, \\
\\
-\sin (\frac{2n\pi}{T}(x-\frac{T}{4(n+1)})), \ \ \mbox{if} \ \
\varepsilon \leq x \leq \frac{T}{4(n+1)}, \\ \\
-u_\varepsilon (\frac{2T}{4(n+1)} -x), \ \ \mbox{if} \ \
\frac{T}{4(n+1)} \leq x \leq \frac{2T}{4(n+1)}, \\ \\
u_\varepsilon (\frac{4T}{4(n+1)}-x), \ \ \mbox{if} \ \
\frac{2T}{4(n+1)} \leq x \leq \frac{4T}{4(n+1)}, \\ \\
%-u_\varepsilon (\frac{6T}{4(n+1)}-x), \ \ \mbox{if} \ \
%\frac{4T}{4(n+1)} \leq x \leq \frac{6T}{4(n+1)},  \\ \\
%\ldots
u_{\varepsilon} \ \mbox{is extended to the interval} \ [0,T] \
\mbox{as a} \ \frac{T}{n+1}-\mbox{periodic function.}
\end{array}
\right.
\end{equation}
Then $u_\varepsilon \in C^2[0,T]$, the function $a_\varepsilon(x)
\equiv \frac{-u_\varepsilon ''(x)}{u_\varepsilon (x)}, \ \forall \
x \in [0,T], \ x \neq \frac{(2k-1)T}{4(n+1)},  \ 1 \leq k \leq
2(n+1),$ belongs to $\landan$ and
\begin{equation}\label{0307t2}\liminf_{\varepsilon \rightarrow
0^+} \ \Vert a_\varepsilon - \landaene \Vert_{L^1 (0,T)} =
\frac{8\pi n(n+1)}{T} \cot\frac{n\pi}{2(n+1)}
\end{equation}
\end{lemma}
\begin{proof}
We claim that for each $0 \leq i \leq 4n+3,$ function
$a_\varepsilon$ satisfies
\begin{equation}\label{0307t1}
\landaene \prec a_\varepsilon, \ \mbox{in the interval } \ \left (
\frac{iT}{4(n+1)}, \frac{(i+1)T}{4(n+1)} \right )
\end{equation}
and
\begin{equation}\label{0307t3}
\liminf_{\varepsilon \rightarrow 0^+} \ \Vert a_\varepsilon -
\landaene \Vert_{L^1 (\frac{iT}{4(n+1)}, \frac{(i+1)T}{4(n+1)})} =
\frac{2n\pi}{T} \cot \frac{n\pi}{2(n+1)}
\end{equation}
It is trivial that from (\ref{0307t1}) and (\ref{0307t3}) we
deduce (\ref{0307t2}). Moreover, taking into account the
definition of the function $u_\varepsilon,$ it is clear that it is
sufficient to prove the claim in the case $i =0.$ Now, if $x \in
(0,\frac{T}{4(n+1)})$ we can distinguish two cases:
\begin{enumerate}
\item $x \in (\varepsilon, \frac{T}{4(n+1)}).$ Then $a_\varepsilon
(x) = \frac{-u_\varepsilon ''(x)}{u_\varepsilon (x)} \equiv
\landaene.$ \item $x \in (0,\varepsilon).$ Then
\end{enumerate}
$$
a_\varepsilon (x) - \landaene = \frac{-4
\frac{x-\varepsilon}{\varepsilon^2} \frac{n\pi}{T} \cos
\frac{n\pi}{2(n+1)} -  8\frac{(x-\varepsilon)^3}{3\varepsilon^2}
\frac{n^3\pi^3}{T^3} \cos \frac{n\pi}{2(n+1)}}{-\sin
(\frac{2n\pi}{T}(x-\frac{T}{4(n+1)})) +2
\frac{(x-\varepsilon)^3}{3\varepsilon^2} \frac{n\pi}{T} \cos
\frac{n\pi}{2(n+1)}} > 0
$$
Therefore $a_\varepsilon \in \Lambda_n.$ Moreover, if $\varepsilon
\rightarrow 0^+,$ then
$$
\frac{- 8\frac{(x-\varepsilon)^3}{3\varepsilon^2}
\frac{n^3\pi^3}{T^3} \cos \frac{n\pi}{2(n+1)}}{-\sin
(\frac{2n\pi}{T}(x-\frac{T}{4(n+1)})) +2
\frac{(x-\varepsilon)^3}{3\varepsilon^2} \frac{n\pi}{T} \cos
\frac{n\pi}{2(n+1)}} \rightarrow 0,
$$
uniformly if $x \in (0,\varepsilon).$

Finally, since
$$
\lim_{\varepsilon \rightarrow 0^+} \int_0^{\varepsilon} \left [
\frac{-4 \frac{x-\varepsilon}{\varepsilon^2} \frac{n\pi}{T} \cos
\frac{n\pi}{2(n+1)}} {-\sin (\frac{2n\pi}{T}(x-\frac{T}{4(n+1)})) +2
\frac{(x-\varepsilon)^3}{3\varepsilon^2} \frac{n\pi}{T} \cos
\frac{n\pi}{2(n+1)}} - \frac{-4
\frac{x-\varepsilon}{\varepsilon^2} \frac{n\pi}{T} \cos
\frac{n\pi}{2(n+1)}} {-\sin (\frac{2n\pi}{T}(x-\frac{T}{4(n+1)})) }
\right ] = 0
$$
and
$$-\sin (\frac{2n\pi}{T}(x-\frac{T}{4(n+1)})) \rightarrow \sin
\frac{n\pi}{2(n+1)},$$ uniformly  in $x \in (0,\varepsilon)$ when
$\varepsilon \rightarrow 0+,$ we deduce
$$
\liminf_{\varepsilon \rightarrow 0^+} \ \Vert a_{\varepsilon} -
\landaene \Vert_{L^1 (0,\frac{T}{4(n+1)})} = \liminf_{\varepsilon
\rightarrow 0^+} \frac{n\pi}{T} \cot \frac{n\pi}{2(n+1)}
\frac{4}{\varepsilon^2}\int_0^{\varepsilon} (\varepsilon - x) =
\frac{2n\pi}{T} \cot \frac{n\pi}{2(n+1)}
$$
which is (\ref{0307t3}) for the case $i=0$.
\end{proof}

In the next Lemma we prove that the infimum $\beta_{1,n}$ is not
attained. The key point in the proof is the optimality property of
the different inequalities which have been obtained previously.

\begin{lemma}\label{l6}
$\beta_{1,n}$ is not attained.
\end{lemma}

\begin{proof}
Let $a \in \landan$ be such that $\Vert a - \landaene \Vert_{L^1
(0,T)} = \beta_{1,n}.$ Let $u$ be any nontrivial solution of
(\ref{p1}) associated to the function $a$. As previously, we
denote the zeros of $u$ by $0 = x_0 < x_2 < \ldots < x_{2m} = T$
and the zeros of $u'$ by $ x_1 < x_3 < \ldots < x_{2m-1}. $ By
using (\ref{02071}) and Lemma \ref{l2}, we have
\begin{equation}\label{1209071}
\begin{array}{c}
\beta_{1,n} = \Vert a - \landaene \Vert_{L^1 (0,T)} =
\displaystyle \sum_{i=0}^{2m-1}  \Vert a - \landaene \Vert_{L^1
(x_i,x_{i+1})} \geq \\ \\
\sum_{i=0}^{2m-1} J_i (u) \geq \frac{2n\pi}{T}\displaystyle
\sum_{i=0}^{2m-1} \cot \frac{2n\pi (x_{i+1}-x_i)}{T} \geq \\ \\
\frac{4\pi n m}{T}  \cot \frac{n\pi}{m} \geq \frac{8\pi n
(n+1)}{T} \cot \frac{n\pi}{2(n+1)} = \beta_{1,n}
\end{array}
\end{equation}
where $J_i (u)$ is given either by $$ J_i(u) =
\frac{\int_{x_i}^{x_{i+1}} u'^2 - \landaene \int_{x_i}^{x_{i+1}}
u^2 }{u^2(x_{i+1})}, \ \ \mbox{if} \ u(x_i) = 0 $$ or by
$$
J_i(u)= \frac{\int_{x_i}^{x_{i+1}} u'^2 - \landaene
\int_{x_i}^{x_{i+1}} u^2 }{u^2(x_i)}, \ \ \mbox{if} \ u(x_{i+1}) =
0. $$ Consequently, all inequalities in (\ref{1209071}) transform
into equalities. In particular we obtain that
$$
m = 2(n+1), \ x_{i+1}-x_i = \frac{T}{4(n+1)}, \ 0 \leq i \leq
4n+3.
$$
Also, it follows
$$ J_i (u) = \frac{2n\pi}{T} \cot \frac{2n\pi}{ T}\frac{T}{ 4 (n+1)}, \
0 \leq i \leq 4n+3.
$$
From Lemma \ref{l2} we deduce that, up to some nonzero constants,
function $u$ fulfils in each interval $[x_i,x_{i+1}],$
$$
u(x) = \frac{\sin \frac{2n\pi}{T} (x-x_i)}{\sin \frac{2n\pi}{T}
(x_{i+1}-x_i)}, \ \mbox{if} \ i \ \mbox{is even},
$$
$$
\mbox{and}\ \ u(x) = \frac{\sin \frac{2n\pi}{T} (x-x_{i+1})}{\sin
\frac{2n\pi}{T} (x_{i}-x_{i+1})}, \ \mbox{if} \ \ i \ \mbox{is
odd}.
$$
In particular, in the interval $[0,\frac{T}{4(n+1)}]= [x_0,x_1],$
$u$ must be the function
$$
u(x) = \frac{\sin \frac{2n\pi}{T}(x)}{\sin
\frac{2n\pi}{T}(\frac{T}{4(n+1)})}
$$
which does not satisfy the condition $u'(x_1) = 0.$ The conclusion
is that $\beta_{1,n}$ is not attained.
\end{proof}

For the last part of the theorem, let us assume that the function
$a$ satisfies (\ref{positivo}). Then, since $\lambda_{2n-1} \prec
a,$ we trivially have $\lambda_{2n}(a) <
\lambda_{2n}(\lambda_{2n-1}) = 0.$ To prove that $\lambda_{2n+1}
(a) >0$ we use a continuation method: let us define the continuous
function $g: [0,1] \rightarrow \real$ by
$$
g(\varepsilon) = \lambda_{2n+1}(a_{\varepsilon}(\cdot))
$$
where $a_{\varepsilon}(x) = \lambda_{2n-1} + \varepsilon (a(\cdot)
- \lambda_{2n-1}).$ Then $g(0) =\lambda_{2n+1}\left (
\lambda_{2n-1} \right )= \lambda_{2n+1} - \lambda_{2n-1} > 0.$
Moreover, $g(\varepsilon) \neq 0, \ \forall \ \varepsilon \in
(0,1].$ In fact, for each $\varepsilon \in (0,1]$ the function
$a_{\varepsilon}(x)$ satisfies $\lambda_{2n-1} \prec
a_{\varepsilon}$ and $\Vert a_{\varepsilon}(\cdot) -
\lambda_{2n-1} \Vert_{L^1 (0,T)} \leq \beta_{1,n}.$ Consequently,
we deduce from the previous parts of the Theorem that the number
$0$ is not an eigenvalue of the function $a_{\varepsilon}$ for the
periodic boundary conditions. As a consequence,
$\lambda_{2n+1}(a)= g(1)
> 0$ and the Theorem is proved.

\end{proof}

\begin{remark}\label{010609t1} Let us observe that we can obtain similar results if, in the definition
of the set $\Lambda_n$ in (\ref{p2806073}), we consider $n \in
\real^+$ instead of $n \in \natu.$ Only some minor changes are
necessary. From this point of view, if we consider $\beta_{1,n}$
as a function of $n \in (0,+\infty),$ then $\lim_{n \rightarrow
0^+} \ \beta_{1,n} = \frac{16}{T},$ the constant of the classical
$L^1$ Lyapunov inequality at the first eigenvalue which was
obtained in (\cite{huaizhongyongjde94}) by using methods of
optimal control theory. In fact, we can use similar reasonings to
those of Theorem \ref{pt1} if $a \in \Lambda_0$ where
\begin{equation}
{\small \Lambda_0 = \{ a \in \periodica \setminus \{ 0 \}: 0 \leq
\int_0^T a(x) \ dx \ \ \mbox{and} \ (\ref{p1}) \ \mbox{has
nontrivial solutions} \ \} }
\end{equation}
In this case $m\geq 2$ and any even value $m \geq 2$ is possible.
Consequently
\begin{equation}\label{1307094}
\beta_{1,0} = \inf_{a \in \Lambda_0} \ \Vert a^+ \Vert_{L^1 (0,T)}
= \displaystyle \frac{16}{T}
\end{equation}
Let us remark that the restriction
$$
a \in \periodica \setminus \{ 0 \}: \lambda_0 = 0 \leq \int_0^T
a(x) \ dx
$$
is more general that the restriction $ \lambda_0  \prec a.$
\end{remark}

\begin{remark}\label{1212073}
 The case where $T = 2\pi$ and
function $a$ satisfies the condition $ A \leq a(x) \leq B, \
\mbox{a.e. in} \ (0,2\pi) $ where $k^2 < A < (k+1)^2 < B$ for some
$k \in \natu \cup \{ 0 \}, $ has been considered in
\cite{wanglina95}, where the authors also use optimal control
theory methods. In this paper, the authors define the set
$\Lambda_{A,B}$ as the set of functions $a $ such that $A \leq
a(x) \leq B, \ \mbox{a.e. in} \ (0,T)$ and (\ref{p1}) has
nontrivial solutions. Then, by using the Pontryagin's maximum
principle they prove that the number
$$ \beta_{A,B} \equiv \inf_{a \in
\Lambda_{A,B} } \ \Vert a \Vert_{L^1 (0,T)} $$ is attained. In
addition, they calculate $\lim_{B \rightarrow +\infty} \
\beta_{A,B}.$ However, if $A \rightarrow k^2,$ it does not seem
possible to deduce from \cite{wanglina95} that the constant
$\beta_{1,k}$ (defined in (\ref{p1009071})) is not attained. In
fact, to the best of our knowledge, this result is new. Moreover,
our method, which combines a detailed analysis about the number
and distribution of zeros of nontrivial solutions of (\ref{p1})
and their first derivatives, together with the use of some special
minimization problems, can be used to unify other results obtained
for different boundary conditions (see \cite{cavijems} for the
Neumann and Dirichlet problem).
\end{remark}

\begin{remark}\label{1604093}
In \cite{zhang}, the author proves that if the function $a\in
\periodica$ satisfies
$$
\lambda_{2n-1}(0) \prec a, \ \Vert a \Vert_{L^1 (0,T)} \leq
\frac{16(n+1)^2}{T},
$$
then $\lambda_{2n}(a) < 0 < \lambda_{2n+1}(a).$ It it trivial that
$$
\frac{16(n+1)^2}{T} < \gamma_{1,n}, \ \forall \ n \in \mathbb{N}
$$
and that
$$
\displaystyle \lim_{n \rightarrow + \infty} \displaystyle \frac{T
\gamma_{1,n}}{16(n+1)^2} = \displaystyle \frac{\pi^2}{4}
$$
Therefore, the results given in Theorem \ref{pt1} is more precise.
Moreover, taking into account the definition of $\gamma_{1,n}$
(see (\ref{positivo})), if $\lambda_{2n-1} \prec a,$ our result is
optimal from the point of view of the nonexistence of nontrivial
solution of (\ref{p1}). On the other hand, in \cite{zhang} the
author studies the case of Dirichlet, periodic and antiperiodic
boundary conditions for one dimensional $q-$Laplacian operators,
by using $L^p-$ norms of the function $a$ (see also
\cite{camovimia} for Lyapunov inequalities at the first
eigenvalue).
\end{remark}

\section{The antiperiodic problem}

We can do an analogous study for other boundary value problems. In
fact, the key point of the method used in Theorem \ref{pt1} is to
have an optimal knowledge about the number and distribution of
zeros of the function $u$ and its derivative $u',$ moreover of
knowing the best value of the constant $m.$ Thinking in the next
section, we consider the anti-periodic boundary value problem
\begin{equation}\label{ap1}
u''(x) + a(x)u(x) = 0, \ x \in (0,T), \ u(0)+u(T) = u'(0)+ u'(T) =
0
\end{equation}
where $a \in \periodica.$ To this respect, it is very well known
that for any function $a \in \periodica,$ the eigenvalues for
\begin{equation}\label{antip2bisbis}
 u''(x) + (\tilde{\lambda}+a(x)) u(x) = 0, \ x \in (0,T), \ u(0)+u(T) = u'(0)+ u'(T) =
0 \end{equation} form a sequence $\tilde{\lambda}_{n} (a), \ n \in
\natu,$ such that
\begin{equation}\label{anti1604091}
\tilde{\lambda}_1 (a) \leq \tilde{\lambda}_2 (a)< \ldots <
\tilde{\lambda}_{2n-1}(a) \leq \tilde{\lambda}_{2n}(a) < \ldots
\end{equation}
such that if $\tilde{\phi}_n$ is the corresponding eigenfunction
to $\tilde{\lambda}_n(a),$ then $\tilde{\phi}_{2n-1}$ and
$\tilde{\phi}_{2n}$ have exactly $2n-1$ zeros in $[0,T)$ (see
\cite{cole}). In particular, the set of eigenvalues of
\begin{equation}\label{antip2}
 u''(x) + \lambda u(x) = 0, \ x \in (0,T), \ u(0)+u(T) = u'(0)+ u'(T) =
0 \end{equation} is given by $\tilde{\lambda}_{2n-1}(0) =
\tilde{\lambda}_{2n}(0)= (2n-1)^2 \pi^2/T^2, \ n \in \natu.$ We
will denote $\tilde{\lambda}_i = \tilde{\lambda}_i (0), \ \forall
\ i \in \natu.$

$ $

If $n \in \natu$ is fixed, we can introduce the set
$\tilde{\Lambda}_{n}$ as
\begin{equation}
\tilde{\Lambda}_{n} = \{ a \in \periodica: \tilde{\lambda}_{2n-1}
\prec a \ \mbox{and} \ (\ref{ap1}) \ \mbox{has nontrivial
solutions} \ \}
\end{equation}
The similar Theorem to Theorem \ref{pt1} is the following one.
\begin{theorem}\label{antipt1}
Let $n \in \natu$ and $a \in \tilde{\Lambda}_{n}$ be given and $u$
any nontrivial solution of (\ref{ap1}) such that $u(0) = u(T) =
0.$ If the zeros of $u$ in $[0,T]$ are denoted by $0 = x_0 < x_2 <
\ldots < x_{2m} = T$ and the zeros of $u'$ in $(0,T)$ are denoted
by $ x_1 < x_3 < \ldots < x_{2m-1}, $ then:
\begin{enumerate}
\item $x_{i+1} - x_{i} \leq \frac{T}{2(2n-1)}, \ \forall \ i: \ 0
\leq i \leq 2m-1.$ Moreover, at least one of these inequalities is
strict. \item $m$ is an odd number and $m \geq 2n+1.$ Any odd
value $m\geq 2n+1$ is possible. %
\item
\begin{equation}\label{a02071}
\Vert a - \tilde{\lambda}_{2n-1} \Vert_{L^1 (x_i,x_{i+1})} \geq
\frac{(2n-1)\pi}{T} \cot (\frac{(2n-1)\pi}{T}(x_{i+1}-x_i)),\ 0
\leq i \leq 2m-1.
\end{equation}
\item
\begin{equation}\label{ap1009071}
\tilde{\beta}_{1,n} \equiv \inf_{a \in \tilde{\Lambda}_{n} } \
\Vert a - \tilde{\lambda}_{2n-1} \Vert_{L^1 (0,T)} =
\frac{2\pi(2n-1)(2n+1)}{T} \cot \frac{(2n-1)\pi}{2(2n+1)}
\end{equation}
and $\tilde{\beta}_{1,n}$ is not attained.%
\item If $a \in \periodica$ satisfies
\begin{equation}\label{apositivo}
\tilde{\lambda}_{2n-1} \prec a, \ \Vert a \Vert_{L^1(0,T)} \leq
\tilde{\gamma}_{1,n} = T\tilde{\lambda}_{2n-1} +
\tilde{\beta}_{1,n},
\end{equation}
then
\begin{equation}\label{a1604096}
\tilde{\lambda}_{2n}(a) < 0 < \tilde{\lambda}_{2n+1}(a)
\end{equation}
\end{enumerate}
\end{theorem}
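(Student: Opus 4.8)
The plan is to run the proof of Theorem \ref{pt1} almost verbatim, with the eigenvalue $\tilde{\lambda}_{2n-1} = (2n-1)^2\pi^2/T^2$ (so $\sqrt{\tilde{\lambda}_{2n-1}} = (2n-1)\pi/T$) playing the role of $\landaene = 4n^2\pi^2/T^2$. For parts (1) and (3) nothing conceptual changes: on each subinterval $(x_i,x_{i+1})$ the solution $u$ solves a mixed Dirichlet--Neumann problem whose principal eigenvalue is $\mu_1^i = \pi^2/(4(x_{i+1}-x_i)^2)$ as in (\ref{2806075}), and testing against $\varphi_1^i$ gives the orthogonality identity (\ref{2806078}); if $x_{i+1}-x_i > T/(2(2n-1))$ then $\mu_1^i < \tilde{\lambda}_{2n-1} \leq a$, a contradiction, and strictness of at least one inequality follows from $\tilde{\lambda}_{2n-1}\prec a$. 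Part (3) is then the energy estimate (\ref{202071}) combined with Lemma \ref{l2} applied with $M = \tilde{\lambda}_{2n-1}$, yielding the constant $\frac{(2n-1)\pi}{T}\cot(\frac{(2n-1)\pi}{T}(x_{i+1}-x_i))$ of (\ref{a02071}).

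The one genuinely different point is the parity in (2). Since $u(0)=u(T)=0$, the antiperiodic condition $u'(0)+u'(T)=0$ forces $u'(x_0)u'(x_{2m}) = u'(0)u'(T) = -u'(0)^2 < 0$, in contrast with the strict positivity used in Theorem \ref{pt1}. Tracking the sign pattern of $u$ and $u'$ across the $x_j$ exactly as in the periodic proof gives $u'(x_{2m}) = (-1)^m u'(x_0)$, so now $(-1)^m < 0$ and $m$ must be odd. The counting inequality $T = \sum_{i=0}^{2m-1}(x_{i+1}-x_i) < 2m\cdot\frac{T}{2(2n-1)}$ gives $m > 2n-1$, and combined with oddness this yields $m \geq 2n+1$. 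For realizability of any odd $m = q \geq 2n+1$, take $a \equiv (q\pi/T)^2$, which is an eigenvalue of (\ref{antip2}) precisely because $q$ is odd, together with $v(x) = \sin(q\pi x/T)$: it satisfies $v(0)=v(T)=0$ and $v'(0)+v'(T) = \frac{q\pi}{T}(1 + (-1)^q) = 0$, belongs to $\tilde{\Lambda}_n$, and has $m = q$.

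For (4), summing (\ref{a02071}) and minimizing the right-hand side over the spacing (equal spacing $x_{i+1}-x_i = T/(2m)$, by the convexity argument of Lemma 2.5 in \cite{cavijems}) gives $\tilde{\beta}_{1,n} \geq \frac{2m(2n-1)\pi}{T}\cot\frac{(2n-1)\pi}{2m}$; since $m\cot\frac{(2n-1)\pi}{2m}$ is increasing in $m$ and $m \geq 2n+1$, evaluation at $m = 2n+1$ is the claimed value. The reverse inequality needs the analogue of Lemma \ref{l7}: a minimizing family $u_\varepsilon$ whose fundamental block on $[0, T/(2(2n+1))]$ is $-\sin\frac{(2n-1)\pi}{T}(x - \frac{T}{2(2n+1)})$, corrected by a cubic on $[0,\varepsilon]$ so that $u_\varepsilon'(0)=0$ and $a_\varepsilon = -u_\varepsilon''/u_\varepsilon > \tilde{\lambda}_{2n-1}$ there, with $\liminf_\varepsilon \Vert a_\varepsilon - \tilde{\lambda}_{2n-1}\Vert_{L^1(0,T/(2(2n+1)))} = \frac{(2n-1)\pi}{T}\cot\frac{(2n-1)\pi}{2(2n+1)}$. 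Non-attainment follows the argument of Lemma \ref{l6}: equality throughout would force $m = 2n+1$, equal spacing, and $u$ equal to the pure sine $\sin\frac{(2n-1)\pi}{T}(x-x_i)$ on each subinterval, which fails the interior condition $u'(x_1)=0$ because $\cos\frac{(2n-1)\pi}{2(2n+1)} \neq 0$.

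Finally, (5) is the continuation argument of Theorem \ref{pt1}: from $\tilde{\lambda}_{2n-1}\prec a$ and strict monotonicity of the eigenvalues in $a$ one gets $\tilde{\lambda}_{2n}(a) < \tilde{\lambda}_{2n}(\tilde{\lambda}_{2n-1}) = \tilde{\lambda}_{2n}(0) - \tilde{\lambda}_{2n-1} = 0$, while setting $a_\varepsilon = \tilde{\lambda}_{2n-1} + \varepsilon(a - \tilde{\lambda}_{2n-1})$ and $g(\varepsilon) = \tilde{\lambda}_{2n+1}(a_\varepsilon)$ one has $g(0) = \tilde{\lambda}_{2n+1} - \tilde{\lambda}_{2n-1} > 0$ and $\Vert a_\varepsilon - \tilde{\lambda}_{2n-1}\Vert_{L^1} = \varepsilon\Vert a - \tilde{\lambda}_{2n-1}\Vert_{L^1} \leq \tilde{\gamma}_{1,n} - T\tilde{\lambda}_{2n-1} = \tilde{\beta}_{1,n}$, so non-attainment of $\tilde{\beta}_{1,n}$ forces $a_\varepsilon \notin \tilde{\Lambda}_n$ and $g(\varepsilon)\neq 0$ on $(0,1]$, whence $\tilde{\lambda}_{2n+1}(a) = g(1) > 0$. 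The main obstacle I expect is precisely the construction in (4): unlike Lemma \ref{l7}, where the four-subinterval block is extended as a genuine $\frac{T}{n+1}$-periodic function, here $2m = 2(2n+1)$ is not divisible by $4$, so the correct device is to extend the two-subinterval block anti-periodically with anti-period $\frac{T}{2n+1}$, i.e.\ $u_\varepsilon(x + \frac{T}{2n+1}) = -u_\varepsilon(x)$; this automatically yields $u_\varepsilon(x+T) = (-1)^{2n+1}u_\varepsilon(x) = -u_\varepsilon(x)$, hence the antiperiodic boundary conditions, while preserving the $C^2$ matching, and checking admissibility $a_\varepsilon\in\tilde{\Lambda}_n$ together with the $\liminf$ computation is the technical heart of the argument.
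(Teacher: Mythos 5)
Your proposal is correct and is exactly the approach the paper intends: the paper states Theorem \ref{antipt1} without proof, deferring to the method of Theorem \ref{pt1}, and your modifications — the sign computation $u'(0)u'(T)=-u'(0)^2<0$ forcing $m$ odd, the count $m>2n-1$ giving $m\geq 2n+1$ with odd constants $a\equiv q^2\pi^2/T^2$ realizing each odd $m=q$, Lemma \ref{l2} with $M=\tilde{\lambda}_{2n-1}$, and the continuation argument for part (5) using non-attainment of $\tilde{\beta}_{1,n}$ — are precisely what "similar" requires. In particular, your observation that the minimizing family must be built by extending the two-subinterval block \emph{anti-periodically} with anti-period $\frac{T}{2n+1}$ (rather than periodically, since $2m=2(2n+1)$ is not divisible by $4$) correctly resolves the one point where the construction of Lemma \ref{l7} does not carry over verbatim.
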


\begin{remark}\label{1307092} A similar theorem to the previous one may be proved if $a
\in \tilde{\Lambda}_0$ where
\begin{equation}
{\small \tilde{\Lambda}_0 = \{ a \in \periodica:  \ (\ref{ap1}) \
\mbox{has nontrivial solutions} \ \} }
\end{equation}
In this case $m\geq 1$ and any even value $m \geq 1$ is possible.
Consequently
\begin{equation}\label{1307093}
\tilde{\beta}_{1,0} = \inf_{a \in \tilde{\Lambda}_0} \ \Vert a^+
\Vert_{L^1 (0,T)} = \displaystyle \frac{4}{T}
\end{equation}
Let us remark that the restriction $ 0 \prec a $ which is natural
for the periodic problem (\ref{p1}), is not necessary in this case
(see Remark 4 in \cite{camovimia}).
\end{remark}

\section{Some applications}

\subsection{Stability of linear periodic
equations}\label{subsection1}

We begin with an application to the Lyapunov stability of the
Hill's equation
\begin{equation}\label{nuevahille}
u''(x) + a(x) u(x) = 0, \ a \in \periodica.
\end{equation}
To this respect, it is convenient to introduce the parametric
equation
\begin{equation}\label{2hilleq}
u''(x) + (\mu+a(x)) u(x) = 0, \ a \in \periodica, \ \mu \in \real.
\end{equation}
Remember that if $\lambda_i (a), \ i \in \natu \cup \{ 0 \}$ and
$\tilde{\lambda}_i (a), \ i \in \natu,$ denote, respectively the
eigenvalues of (\ref{nuevahille}) for the periodic and
antiperiodic problem, then it is known (\cite{cole}, \cite{hale})
that
\begin{equation}\label{2004097}
\lambda_0 (a) < \tilde{\lambda}_1 (a) \leq \tilde{\lambda}_2 (a) <
\lambda_{1}(a) \leq \lambda_{2}(a) < \tilde{\lambda}_{3}(a) \leq
\tilde{\lambda}_{4}(a) < \lambda_3 (a) \leq \ldots
\end{equation}
and that equation (\ref{2hilleq}) is stable if
\begin{equation}\label{2004095}
\mu \in (\lambda_{2n}(a),\tilde{\lambda}_{2n+1}(a)) \cup
(\tilde{\lambda}_{2n+2}(a),\lambda_{2n+1}(a))
\end{equation}
for some $n \in \natu \cup \{ 0 \}$ and that equation
(\ref{2hilleq}) is unstable if
\begin{equation}\label{2004096}
\mu \in (-\infty,\lambda_0 (a)] \cup
(\lambda_{2n+1}(a),\lambda_{2n+2}(a)) \cup
(\tilde{\lambda}_{2n+1}(a),\tilde{\lambda}_{2n+2}(a))
\end{equation}
for some $n \in \natu \cup \{ 0 \}.$ If $\mu = \lambda_{2n+1}(a)$
or $\mu = \lambda_{2n+2}(a)$, (\ref{2hilleq}) is stable if and
only if $\lambda_{2n+1}(a) = \lambda_{2n+2}(a)$ and, finally, if
$\mu = \tilde{\lambda}_{2n+1}(a)$ or $\mu =
\tilde{\lambda}_{2n+2}(a)$, (\ref{2hilleq}) is stable if and only
if $\tilde{\lambda}_{2n+1}(a) = \tilde{\lambda}_{2n+2}(a).$

\begin{theorem}\label{t0605091}
Let $a \in \periodica$  satisfying
\begin{equation}\label{borg}
\begin{array}{c}
 \exists \ p \in \mathbb{N}, \ \exists \ k \ \in [\frac{p^2 \pi^2}{T^2}, \frac{(p+1)^2 \pi^2}{T^2}]: \\ \\
 k \leq  a, \ \
 \Vert a \Vert_{L^1(0,T)}  \leq kT + k^{1/2}2(p+1)\cot \frac{k^{1/2}T}{2(p+1)}
 \end{array}
\end{equation}
Then $\mu = 0$ is in the $n^{th}$ stability zone of the Hill's
equation (\ref{2hilleq}).
\end{theorem}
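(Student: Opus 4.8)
The plan is to run the continuation method already used at the end of the proof of Theorem \ref{pt1}, but now anchored at the constant coefficient $k$ rather than at an eigenvalue. Set $a_\varepsilon = k + \varepsilon(a-k)$ for $\varepsilon\in[0,1]$, so that $a_0\equiv k$ and $a_1 = a$. Since $k\le a$, every $a_\varepsilon$ satisfies $k\le a_\varepsilon$, and because $a\ge k>0$ we have $\Vert a_\varepsilon - k\Vert_{L^1(0,T)} = \varepsilon\,(\Vert a\Vert_{L^1(0,T)} - kT)\le \Vert a\Vert_{L^1(0,T)} - kT\le k^{1/2}2(p+1)\cot\frac{k^{1/2}T}{2(p+1)}$, the last step being exactly hypothesis (\ref{borg}). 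For the constant coefficient $a_0\equiv k$ the periodic and antiperiodic eigenvalues of (\ref{2hilleq}) are $\lambda_i-k$ and $\tilde{\lambda}_i-k$; when $\frac{p^2\pi^2}{T^2}<k<\frac{(p+1)^2\pi^2}{T^2}$ the point $\mu=0$ falls strictly between the double eigenvalue $\frac{p^2\pi^2}{T^2}-k<0$ and the next one $\frac{(p+1)^2\pi^2}{T^2}-k>0$, i.e. in the interior of the stability zone determined by $p$ (the $(p+1)$-st zone in the ordering (\ref{2004097})). It therefore suffices to show that along the whole path $\mu=0$ never becomes a periodic or an antiperiodic eigenvalue of $a_\varepsilon$: by continuity of the curves $\varepsilon\mapsto\lambda_i(a_\varepsilon),\ \tilde{\lambda}_i(a_\varepsilon)$ and the strict separation of consecutive stability zones in (\ref{2004097}), this keeps $\mu=0$ inside the same stability zone up to $\varepsilon=1$.

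To prove that $0$ is never an eigenvalue I would re-run the zero-counting argument of Theorems \ref{pt1} and \ref{antipt1} with the constant lower bound $k$ in place of $\landaene$ (resp. $\tilde{\lambda}_{2n-1}$). Suppose that for some $\varepsilon$ (necessarily $\varepsilon>0$, the case $a\equiv k$ being the constant coefficient already treated) the number $0$ is a periodic or antiperiodic eigenvalue of $a_\varepsilon$, and let $u$ be a corresponding eigenfunction, normalized so that $u(0)=u(T)=0$, with consecutive zeros $0=x_0<x_2<\dots<x_{2m}=T$ of $u$ and interlacing zeros of $u'$. The comparison with the test eigenfunction $\varphi_1^i$ as in (\ref{2806078}), now using $\frac{\pi^2}{4(x_{i+1}-x_i)^2}<k\le a_\varepsilon$, forces $x_{i+1}-x_i\le \frac{\pi}{2k^{1/2}}$ for every $i$, with at least one strict inequality because $k\prec a_\varepsilon$. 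Summing gives $T=\sum_{i=0}^{2m-1}(x_{i+1}-x_i)<\frac{m\pi}{k^{1/2}}$, whence $m>\frac{k^{1/2}T}{\pi}\ge p$ (using $k\ge\frac{p^2\pi^2}{T^2}$) and therefore $m\ge p+1$, irrespective of the parity imposed by the periodic or the antiperiodic boundary condition.

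With this lower bound on $m$ in hand, the third part of the two theorems together with the elementary sum-minimization (Lemma 2.5 in \cite{cavijems}) yield
\[
\Vert a_\varepsilon - k\Vert_{L^1(0,T)}\ \ge\ 2m\,k^{1/2}\cot\frac{k^{1/2}T}{2m}\ \ge\ 2(p+1)\,k^{1/2}\cot\frac{k^{1/2}T}{2(p+1)},
\]
the second inequality because $m\mapsto m\cot\frac{k^{1/2}T}{2m}$ is strictly increasing (the argument stays in $(0,\pi/2]$ since $k^{1/2}T\le(p+1)\pi$). As in Lemma \ref{l6}, the minimizing profile cannot satisfy the boundary condition, so the infimum is not attained and the inequality is in fact strict. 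This contradicts the bound $\Vert a_\varepsilon - k\Vert_{L^1(0,T)}\le 2(p+1)k^{1/2}\cot\frac{k^{1/2}T}{2(p+1)}$ derived in the first paragraph, proving that $0$ is never an eigenvalue along the path and hence that $\mu=0$ lies in the asserted stability zone.

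The main obstacle is the bookkeeping that makes the threshold in (\ref{borg}) exactly the infimum for the configuration with $m=p+1$: one must check that $m\ge p+1$ is the correct uniform lower bound for both the periodic and the antiperiodic eigenfunctions, and that strictness (non-attainment) survives at the endpoint $k=\frac{p^2\pi^2}{T^2}$ and at the degenerate value $k=\frac{(p+1)^2\pi^2}{T^2}$, where the hypothesis forces $a\equiv k$ and $\mu=0$ sits on a closed stability boundary. Handling these limiting values of $k$, together with confirming that the zone keeps its type under the continuation (which follows from the strict inequalities in (\ref{2004097})), is the only delicate point; everything else is a direct transcription of the arguments already established for Theorems \ref{pt1} and \ref{antipt1}.
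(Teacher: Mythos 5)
Your overall strategy is in essence the paper's own: continue along $a_\varepsilon = k+\varepsilon(a-k)$ from the constant $k$, and exclude $\mu=0$ from being an eigenvalue along the path via the $L^1$ Lyapunov chain (spacing bound from the comparison argument, the $\cot$ sum, equidistribution, monotonicity of $m\cot\frac{k^{1/2}T}{2m}$) together with non-attainment, which is what allows the non-strict inequality in (\ref{borg}) at $\varepsilon=1$. In the generic case $\frac{p^2\pi^2}{T^2}<k<\frac{(p+1)^2\pi^2}{T^2}$ your symmetric variant --- excluding $0$ simultaneously as a periodic and as an antiperiodic eigenvalue of every $a_\varepsilon$, so that the two branches bracketing $0$ at $\varepsilon=0$ keep their signs --- is correct, and it even avoids the paper's case distinction on the parity of $p$: the paper uses eigenvalue monotonicity for one endpoint of the zone and continuation only for the other branch ($\tilde{\lambda}_{2n+1}$ when $p=2n$, the periodic branch $\lambda_{2n-1}$ when $p$ is odd), hence needs only one of the two Lyapunov inequalities, while you need both. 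Your uniform bound $m\geq p+1$ is indeed valid for both boundary conditions, since whichever parity constraint applies is either compatible with $m=p+1$ or forces $m\geq p+2$, which only strengthens the inequality.

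The genuine gap is the endpoint $k=\frac{p^2\pi^2}{T^2}$, which hypothesis (\ref{borg}) explicitly allows and which is exactly Krein's case that the theorem is meant to improve. There, with $a\not\equiv k$, your anchor $a_0\equiv k$ has $\mu=0$ equal to a \emph{double eigenvalue} of the constant coefficient ($\lambda_{2n-1}(k)=\lambda_{2n}(k)=0$ if $p=2n$; the antiperiodic double eigenvalue $\tilde{\lambda}_{2n-1}(k)=\tilde{\lambda}_{2n}(k)=0$ if $p$ is odd). So the path starts \emph{on the boundary} of the zone, not in its interior, and ``$0$ is never an eigenvalue along the path'' fails at $\varepsilon=0$. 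More seriously, knowing only that $0$ is not an eigenvalue of $a_\varepsilon$ for $\varepsilon\in(0,1]$ does not determine on which side of $0$ the two branches emanating from the degenerate eigenvalue land: continuity alone is compatible with the splitting $\lambda_{2n-1}(a_\varepsilon)<0<\lambda_{2n}(a_\varepsilon)$, which would put $\mu=0$ inside an \emph{instability} interval of (\ref{2004096}). What rules this out is strict monotonicity of eigenvalues under $\prec$: since $k\prec a_\varepsilon$ for $\varepsilon>0$, both $\lambda_{2n-1}(a_\varepsilon)$ and $\lambda_{2n}(a_\varepsilon)$ lie strictly below $\lambda_{2n}(k)=0$, so the whole double eigenvalue moves to the negative side, and only the branch on the other side (which at $\varepsilon=0$ equals $\frac{(p+1)^2\pi^2}{T^2}-\frac{p^2\pi^2}{T^2}>0$, because $p$ and $p+1$ have different parities) needs the continuation/Lyapunov argument. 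This is precisely how the paper organizes its proof, and it is why the closed interval in (\ref{borg}) is covered there seamlessly. Your closing paragraph flags this endpoint as delicate but attributes the difficulty to non-attainment surviving --- which is not the issue (strictness only needs $\frac{k^{1/2}T}{2(p+1)}<\frac{\pi}{2}$, true here); the missing ingredient is the monotonicity step. Note also that you cannot reduce to the open case by replacing $k$ with a slightly larger $k'$, since $k\leq a$ does not imply $k'\leq a$.
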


\begin{proof} If $\frac{p^2\pi^2}{T^2} \equiv  a$ or $\frac{(p+1)^2\pi^2}{T^2} \equiv
a$ or $k \equiv a,$  then (\ref{2hilleq}) is trivially stable.
Therefore, we can assume
\begin{equation}\label{bisborg}
\begin{array}{c}
 \exists \ p \in \mathbb{N}, \ \exists \ k \ \in (\frac{p^2 \pi^2}{T^2}, \frac{(p+1)^2 \pi^2}{T^2}): \\ \\
 k \prec  a, \ \
 \Vert a \Vert_{L^1(0,T)}  \leq kT + k^{1/2}2(p+1)\cot \frac{k^{1/2}T}{2(p+1)}
 \end{array}
\end{equation}
In this case, the proof is a combination of different ideas used
in the previous two sections. Let us suppose, for instance, that
$p = 2n, \ n \in \mathbb{N}.$ As in Theorem \ref{pt1},
$\lambda_{2n}(a) < \lambda_{2n}(\lambda_{2n-1}) = 0.$ On the other
hand, since $k \prec a,$ doing a similar reasoning to that in
Theorem \ref{pt1}, but for the antiperiodic problem, we have that
if $u$ is a nontrivial solution of (\ref{ap1}) such that $u(0) =
u(T) =0,$ then $\vert x_{i+1}-x_i \vert \leq
\frac{\pi}{2k^{1/2}}.$ This implies the relation $m
> \frac{Tk^{1/2}}{\pi}$ in \ref{1307091}. But since we are now considering the antiperiodic problem
(\ref{ap1}), $m$ must be an odd number. Also $p
<\frac{Tk^{1/2}}{\pi} < p+1,$ and as $p =2n,$ we deduce $m \geq
2n+1.$ Consequently,
\begin{equation}\label{1105091}
\begin{array}{c}
\Vert a - k \Vert_{L^1 (0,T)} \geq k^{1/2} \displaystyle
\sum_{i=0}^{2m-1} \cot (k^{1/2}(x_{i+1}-x_i)) \geq \\ \\
k^{1/2}2m\cot (\frac{k^{1/2}T}{2m}) \geq k^{1/2}2(2n+1)\cot
(\frac{k^{1/2}T}{2(2n+1)})
\end{array}
\end{equation}
(see (\ref{2805091})).  Moreover, this last constant is not
attained. As in Theorem \ref{pt1}, if  $h: [0,1] \rightarrow
\mathbb{R}$ is defined as por $h(\varepsilon) =
\tilde{\lambda}_{2n+1}\left ( k + \varepsilon (a(\cdot) - k)
\right ),$ we obtain $h(0) > 0$ and $h(\varepsilon) \neq 0, \
\forall \ \varepsilon \in (0,1].$ Then, $h(1) =
\tilde{\lambda}_{2n+1}(a) > 0.$ As a consequence, $\mu = 0 \in
(\lambda_{2n}(a),\tilde{\lambda}_{2n+1}(a))$ and the Theorem is
proved. The proof is similar if $p$ is an odd number.
\end{proof}

\begin{remark} The case where $a(x) = \alpha + \beta \psi (x),$
with $\psi \in \periodica, \ \int_0^T \psi (x) \ dx = 0$ and
$\int_0 ^T \vert \psi (x) \vert \ dx = 1/T,$ was studied by Borg
(\cite{borg}). Borg used the characteristic multipliers determined
from Floquet's theory. He deduced stability criteria for
(\ref{2hilleq}) by using the two parameters $\alpha$ and $\beta.$
For a concrete function $a,$ this implies the use of the two
quantities
$$
\frac{1}{T} \int_0^T a(x) \ dx, \ \ \frac{1}{T} \left \Vert
a(\cdot) - \frac{1}{T} \int_0^T a(x) \ dx \right \Vert_{L^1 (0,T)}
$$
It is clear that the results given in Theorem \ref{t0605091} are
of a different nature (see \cite{magnus} and the translator's note
in \cite{krein}). In fact, our results are similar to those
obtained by Krein \cite{krein} by using a different procedure.
However, Krein assumed $k = \frac{p^2 \pi^2}{T^2}$ and an strict
inequality for $\Vert a \Vert_{L^1(0,T)}$ in (\ref{borg}) (see
Theorem 9 in \cite{krein}). By using Theorem \ref{antipt1} we can
assume a non strict inequality in (\ref{borg}) since the constant
$\tilde{\beta}_{1,n}$ is not attained.

Finally, if for a given function $a \in \periodica$ we know that
$a$ satisfies (\ref{bisborg}), the result given in Theorem
\ref{t0605091} is more precise than Krein's result since the
function
$$
kT + k^{1/2}2(p+1)\cot \frac{k^{1/2}T}{2(p+1)}, \ k \in \
[\frac{p^2 \pi^2}{T^2}, \frac{(p+1)^2 \pi^2}{T^2}]
$$
is strictly increasing.
\end{remark}

\begin{remark} The result obtained in previous Theorem uses $L_1$ Lyapunov
inequalities. In a similar way, if one uses $L_\infty$ Lyapunov
inequalities, the following result may be proved ( see
\cite{magnus}, Chapter V, Theorem 5.5). Here we take $T= \pi$ for
simplicity:

If $r$ and $s$ are given real numbers and
\begin{equation}\label{2905091}
r^2 \leq  a(x) \leq s^2
\end{equation}
then (\ref{nuevahille}) is stable for all possible functions $
a(\cdot)$ satisfying (\ref{2905091}) if and only if the interval
$(r^2,s^2)$ does not contain the square of an integer.

In particular, concerning to the first stability zone,
(\ref{nuevahille}) is stable if
\begin{equation}\label{2905092}
0 \leq a(x) \leq  1
\end{equation}
and for functions satisfying $0 \leq a(x),$ this result is optimal
in the following sense: for any positive number $\varepsilon$
there is some function $ a(x)$ with $a \in \periodica,$ satisfying
$0 \leq a(x) \leq 1+\varepsilon$ and such that (\ref{nuevahille})
is unstable.

We can exploit the results obtained in Theorem \ref{pt1} to obtain
new results on the stability properties of (\ref{nuevahille}).
This is the purpose of the next Theorem where function $a$ can be
uniformly greater than $1$ in an appropriate interval $(0,x_0)$ as
long as the length of the interval $(0,x_0)$ is sufficiently
small.
\end{remark}

\begin{theorem}\label{2905093}
Let us choose $T = \pi.$ If function $a$ fulfills

\begin{equation}\label{2905090}
\begin{array}{c}
a \in L_{\pi}(\real,\real), \  \ 0\prec a, \ \exists \alpha \in
(0,\frac{\pi}{2}), \ \exists \ x_0 \in \left (
\frac{\pi}{2}(1-\cos \alpha), \frac{\pi}{2}(1+\cos \alpha) \right
)
: \\ \\
\max \{ x_0^2 \Vert a \Vert_{L^\infty (0,x_0)},  \ (\pi-x_0)^2
\Vert a \Vert _{L^\infty(x_0,\pi)} \} \leq \alpha^2,
\end{array}
\end{equation}
 then (\ref{nuevahille}) is stable.
 \end{theorem}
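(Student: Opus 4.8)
The plan is to place the parameter value $\mu=0$ in the first stability zone of (\ref{nuevahille}). By the stability criterion (\ref{2004095}) (with $n=0$) together with the ordering (\ref{2004097}), it suffices to prove that $\lambda_0(a)<0<\tilde{\lambda}_1(a)$. The left inequality is immediate: since $0\prec a$, the strict monotonicity of the principal periodic eigenvalue with respect to the potential (see \cite{cole}) gives $\lambda_0(a)<\lambda_0(0)=0$. The whole difficulty is therefore concentrated in the lower bound $\tilde{\lambda}_1(a)>0$ for the first antiperiodic eigenvalue.

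First I would dominate $a$ by an explicitly solvable potential. Because $0\prec a$ and the hypothesis (\ref{2905090}) gives $x_0^2\Vert a\Vert_{L^\infty(0,x_0)}\le\alpha^2$ and $(\pi-x_0)^2\Vert a\Vert_{L^\infty(x_0,\pi)}\le\alpha^2$, we have $0\le a\le\bar a$ a.e., where $\bar a\equiv\alpha^2/x_0^2$ on $(0,x_0)$ and $\bar a\equiv\alpha^2/(\pi-x_0)^2$ on $(x_0,\pi)$. Since the antiperiodic eigenvalue $\tilde{\lambda}_1(\cdot)$ is non-increasing in the potential (monotone dependence through the min-max characterization on the antiperiodic form domain), we get $\tilde{\lambda}_1(a)\ge\tilde{\lambda}_1(\bar a)$, so it is enough to prove $\tilde{\lambda}_1(\bar a)>0$.

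For the piecewise-constant $\bar a$ the computation is explicit. On each subinterval the equation $u''+\bar a u=0$ has constant frequency and the phase advances by exactly $\alpha$, because $(\alpha/x_0)\,x_0=(\alpha/(\pi-x_0))(\pi-x_0)=\alpha$; multiplying the two $2\times2$ transfer matrices yields the Hill discriminant at $\mu=0$,
\begin{equation*}
\Delta=2\cos^2\alpha-\left(\frac{x_0}{\pi-x_0}+\frac{\pi-x_0}{x_0}\right)\sin^2\alpha .
\end{equation*}
Since $\alpha\in(0,\pi/2)$ one has $\Delta<2$ automatically, while using $x_0+(\pi-x_0)=\pi$ a short manipulation shows that $\Delta>-2$ is equivalent to $x_0(\pi-x_0)>\tfrac{\pi^2}{4}\sin^2\alpha$, that is, exactly to the hypothesis $x_0\in\left(\tfrac{\pi}{2}(1-\cos\alpha),\tfrac{\pi}{2}(1+\cos\alpha)\right)$, the two endpoints being precisely the roots of $x_0(\pi-x_0)=\tfrac{\pi^2}{4}\sin^2\alpha$. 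Hence $\mu=0$ is neither a periodic nor an antiperiodic eigenvalue of $\bar a$.

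The remaining point, which I expect to be the main obstacle, is to guarantee that this locates $\mu=0$ in the \emph{first} stability zone, i.e. that $\tilde{\lambda}_1(\bar a)>0$ rather than $0$ sitting in some higher zone; the value of $\Delta$ alone does not distinguish the bands. I would settle this by the continuation argument already used in Theorems \ref{pt1} and \ref{t0605091}: set $\bar a_t=t\,\bar a$ for $t\in[0,1]$. Repeating the discriminant computation with $\alpha$ replaced by $\sqrt{t}\,\alpha\le\alpha$ (so $\sin(\sqrt t\,\alpha)\le\sin\alpha$ and the same inequality persists) gives $\Delta(\bar a_t,0)\in(-2,2)$ for every $t$, so $0$ is never an antiperiodic eigenvalue along the path. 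Since $t\mapsto\tilde{\lambda}_1(\bar a_t)$ is continuous and $\tilde{\lambda}_1(\bar a_0)=\tilde{\lambda}_1(0)=1>0$, it cannot cross $0$, whence $\tilde{\lambda}_1(\bar a)=\tilde{\lambda}_1(\bar a_1)>0$. Combined with $\tilde{\lambda}_1(a)\ge\tilde{\lambda}_1(\bar a)$ and $\lambda_0(a)<0$, this gives $\lambda_0(a)<0<\tilde{\lambda}_1(a)$, placing $\mu=0$ in the first stability zone and proving that (\ref{nuevahille}) is stable.
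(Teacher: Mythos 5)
Your proof is correct and follows essentially the same route as the paper: the reduction to $\lambda_0(a)<0<\tilde{\lambda}_1(a)$, the domination of $a$ by the piecewise-constant potential $a_{\alpha,x_0}$, and the scaling continuation along $t\,\bar a=a_{\sqrt{t}\,\alpha,x_0}$ are exactly the content of the paper's Lemmas \ref{2905094} and \ref{2905097}. The only difference is cosmetic: you detect antiperiodic eigenvalues via the trace of the product of the two transfer matrices (the Hill discriminant), while the paper computes the $4\times 4$ determinant of the matching-plus-boundary-condition system; both yield the identical condition $x_0(\pi-x_0)>\frac{\pi^2}{4}\sin^2\alpha$.
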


\begin{proof}
The proof is based on the following two lemmas. The first one is
trivial but necessary. In the second Lemma we exploit the same
idea as in the continuation method used in the proof of the last
part of Theorem \ref{pt1}.

\begin{lemma}\label{2905094}
Let $\alpha \in (0, \frac{\pi}{2})$ and $ x_0 \in (0, \pi)$ be
given. If $a_{\alpha,x_0}$ is defined by
\begin{equation}\label{2005095}
a_{\alpha,x_0}(x) = \left \{
\begin{array}{l}
\frac{\alpha^2}{x_0^2},\ \mbox{if} \ x \in (0,x_0), \\
\frac{\alpha^2}{(\pi -x_0)^2},\ \mbox{if} \ x \in (x_0,\pi), \\
\end{array}
\right.
\end{equation}
then the antiperiodic boundary value problem
\begin{equation}\label{2905096}
u''(x) + a_{\alpha,x_0} (x) u(x) = 0, \ x \in (0,\pi), \ \ u(0) +
u(\pi) = u'(0) + u'(\pi) = 0,
\end{equation}
has nontrivial solutions if and only if $x_0 \in \{
\frac{\pi}{2}(1-\cos \alpha),\frac{\pi}{2}(1+\cos \alpha) \}.$
\end{lemma}
\begin{proof}
Taking into account the formula (\ref{2005095}) for the function
$a_{\alpha,x_0},$ any solution of (\ref{2905096}) must be of the
form
\begin{equation}\label{0106091}
u(x) = \left \{
\begin{array}{l}
A \sin \frac{\alpha x}{x_0} + B \cos \frac{\alpha x}{x_0}, \ 0
\leq x < x_0, \\ \\C \sin \frac{\alpha (x-\pi)}{\pi - x_0} + D
\cos \frac{\alpha (x-\pi)}{\pi - x_0},  x_0 < x \leq \pi
\end{array}
\right.
\end{equation}
Moreover, since $a_{\alpha,x_0} \in L^\infty (0,\pi),$ any
solution of (\ref{2905096}) is, in fact, a $C^1[0,\pi]$ function.
Therefore, it must satisfies the four conditions:
$$
\begin{array}{c}
u(0) + u(\pi) = 0, \ u'(0) + u'(\pi) = 0, \\ \\ \lim_{x
\rightarrow x_0^-} \ u(x) = \lim_{x \rightarrow x_0^+} \ u(x), \
\lim_{x \rightarrow x_0^-} \ u'(x) = \lim_{x \rightarrow x_0^+} \
u'(x).
\end{array}
$$
These four conditions are equivalent to the system of equations
$$
\begin{array}{c}
B + D = 0, \\ \\
\frac{\alpha}{x_0} A + \frac{\alpha}{\pi - x_0} C = 0, \\ \\ A
\sin \alpha + B \cos \alpha + C \sin \alpha - D \cos \alpha = 0,
\\ \\
\frac{A\alpha}{x_0} \cos \alpha - \frac{B\alpha}{x_0} \sin \alpha
- \frac{C\alpha}{\pi -x_0} \cos \alpha - \frac{D\alpha}{\pi -x_0}
\sin \alpha = 0
\end{array}
$$
The determinant of the previous system is equal to
$$
\frac{-4x_0^2 + 4x_0 \pi - \pi^2 \sin^2 \alpha}{x_0 ^2 (\pi -
x_0)^2} \ \alpha^2
$$
and it is zero if and only if $x_0 \in \{ \frac{\pi}{2}(1-\cos
\alpha),\frac{\pi}{2}(1+\cos \alpha) \}.$ The Lemma is proved.
\end{proof}

\begin{lemma}\label{2905097}
Let $\alpha \in (0, \frac{\pi}{2})$ be given and
\begin{equation}\label{0909091}
\begin{array}{c}
a \in L_{\pi}(\real,\real) \ \mbox{ such that} \ \exists \ x_0 \in
\left (
\frac{\pi}{2}(1-\cos \alpha),\frac{\pi}{2}(1+\cos \alpha) \right ): \\ \\
\max \{ x_0^2 \Vert a^+ \Vert_{L^\infty (0,x_0)},  \ (\pi-x_0)^2
\Vert a^+ \Vert _{L^\infty(x_0,\pi)} \} \leq \alpha^2
\end{array}
\end{equation}
then $\tilde{\lambda}_1 (a) > 0.$
\end{lemma}
\begin{proof}
Let $a_{\alpha,x_0}$ be the function defined in (\ref{2005095}).
If we define the continuous function $g: [0,1] \rightarrow \real,$
as $g(\varepsilon) = \tilde{\lambda}_1 (\varepsilon^2
a_{\alpha,x_0}),$ then $g(0) = \tilde{\lambda}_1 (0)=1.$ Moreover,
for each $\varepsilon \in (0,1],$ $\varepsilon^2 a_{\alpha,x_0} =
a_{\varepsilon \alpha,x_0}$ and since $\varepsilon \in (0,1],$ we
deduce from (\ref{0909091}) that $x_0 \notin \
\{\frac{\pi}{2}(1-\cos (\varepsilon \alpha),\frac{\pi}{2}(1+\cos
(\varepsilon \alpha) \}.$ From the previous Lemma we conclude that
the unique solution of the antiperiodic problem
\begin{equation}\label{0106093}
u''(x) + a_{\varepsilon \alpha,x_0} (x) u(x) = 0, \ x \in (0,\pi),
\ \ u(0) + u(\pi) = u'(0) + u'(\pi) = 0
\end{equation}
is the trivial one. Thus, $g(\varepsilon) \neq 0, \ \forall \
\varepsilon \in (0,1].$ Consequently $g(1) = \tilde{\lambda}_1
(a_{\alpha,x_0}) > 0.$ Since $a(x) \leq a_{\alpha,x_0}(x), \ x \in
(0,\pi),$  we have $\tilde{\lambda}_{1}(a) \geq \tilde{\lambda}_1
(a_{\alpha,x_0}).$
\end{proof}

$ $

By using these two Lemmas, the proof of the Theorem is trivial
since in the hypothesis (\ref{2905090}) is included the condition
$0 \prec a.$ This allows to prove that $ \mu = 0 \in
(\lambda_{0}(a), \tilde{\lambda}_1 (a)),$ the first stability zone
of (\ref{2hilleq}).
\end{proof}

\begin{remark}
It is trivially deduced from the previous proof that the
conclusion of Theorem \ref{2905093} is true if we assume the
hypothesis
\begin{equation}
\begin{array}{c}
a \in L_{\pi}(\real,\real) \setminus \{ 0 \} , \  \ \int_0^T a
\geq 0, \ \exists \alpha \in (0,\frac{\pi}{2}), \ \exists \ x_0
\in \left ( \frac{\pi}{2}(1-\cos \alpha), \frac{\pi}{2}(1+\cos
\alpha) \right )
: \\ \\
\max \{ x_0^2 \Vert a^+ \Vert_{L^\infty (0,x_0)},  \ (\pi-x_0)^2
\Vert a^+ \Vert _{L^\infty(x_0,\pi)} \} \leq \alpha^2,
\end{array}
\end{equation}
which is more general than (\ref{2905090}).
\end{remark}
\begin{remark}\label{0106094}
Taking into account the inequality
$$
\frac{\pi}{2}(1-\cos \alpha) < \alpha < \frac{\pi}{2}(1+\cos
\alpha),\ \forall \ \alpha \in (0,\frac{\pi}{2}),
$$
if we select in Theorem \ref{2905093}, $x_0 \in \left (
\frac{\pi}{2}(1-\cos \alpha),\alpha \right ),$ then the quantity
$\Vert a(\cdot) \Vert_{L^\infty (0,\pi)}$ can be greater than one.
Consequently, the $L_ \infty$ criterion  (\ref{2905092}) for the
stability of (\ref{nuevahille}) can not be applied. On the other
hand, for general $\alpha \in (0,\frac{\pi}{2}),$ we may choose
$x_0 = x_0 (\alpha)$ so that the quantity $\frac{\alpha}{x_0} $ is
as close as we want to $ \frac{2\alpha}{\pi (1-\cos \alpha)}.$
Since $\displaystyle \lim _{\alpha \rightarrow 0^+} \
\frac{2\alpha}{\pi (1-\cos \alpha)} = + \infty,$ the conclusion is
that the norm $\Vert a \Vert_{L^\infty (0,x_0 (\alpha))} $ may be
arbitrary large as long as the interval $(0,x_0 (\alpha))$ is
sufficiently small.

Finally, in Theorem \ref{2905093} the norm $\Vert a(\cdot)
\Vert_{L^1 (0,\pi)}$ may be chosen as near as we want to
$\frac{\alpha^2}{x_0} + \frac{\alpha^2}{\pi - x_0} =
\frac{\alpha^2 \pi}{x_0 (\pi -x_0)}$ and  if $\alpha \rightarrow
\frac{\pi}{2}^-,$ we have $\Vert a(\cdot) \Vert_{L^1 (0,\pi)}
\rightarrow \pi > \frac{4}{\pi}$. Consequently, the classical
Lyapunov's criterion for the stability of (\ref{nuevahille}) can
not be applied.
\end{remark}

\subsection{Nonlinear resonant problems}

We finish this paper with some new results on the existence and
uniqueness of solutions of nonlinear periodic b.v.p.
\begin{equation}\label{m38}
u''(x) + f(x,u(x)) = 0, \ x \in (0,T), \ \ u(0)- u(T)= u'(0)-
u'(T) = 0.
\end{equation}

Taking into account previous discussion (see Remark \ref{010609t1}
and Remark \ref{1212073} above), next theorem includes different
situations which can not be studied from the results in
\cite{wanglina95}, Theorem 6 and in \cite{huaizhongyongjde94},
Theorem 6. The proof, which uses similar ideas to that given in
\cite{camovimia} for the case of Neumann boundary conditions at
the first two eigenvalues, combines the linear results of the
previous sections with Schauder's fixed point theorem. We omit the
details.

\begin{theorem}\label{t0605092}
Let us consider (\ref{m38}) where:
\begin{enumerate}
\item $f$ and $f_{u}$ are Caratheodory function on $\real \times
\real$ and $f(x+T,u) = f(x,u), \ \forall \ (x,u) \in \real \times
\real.$%
\item There exist functions $\alpha, \ \beta \in L^\infty (0,T)$
satisfying
\begin{equation}\label{1205091}\lambda_{2n-1} \prec \alpha (x) \leq f_{u}(x,u) \leq
\beta (x), \ \Vert \beta \Vert_{L^1(0,T)} \leq \gamma_{1,n}
\end{equation}
\end{enumerate}
where $\gamma_{1,n}$ has been defined in (\ref{0909092}). Then,
problem (\ref{m38}) has a unique solution.
\end{theorem}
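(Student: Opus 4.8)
The plan is to prove existence and uniqueness separately, both via the linear theory of Theorem~\ref{pt1}. For uniqueness, suppose $u_1$ and $u_2$ are two solutions of (\ref{m38}) and set $w = u_1 - u_2$. Then $w$ is $T$-periodic and satisfies
\begin{equation*}
w''(x) + \left( \int_0^1 f_u(x, s u_1(x) + (1-s) u_2(x)) \, ds \right) w(x) = 0, \quad x \in (0,T),
\end{equation*}
which is exactly problem (\ref{p1}) with coefficient $a(x) = \int_0^1 f_u(x, s u_1 + (1-s) u_2) \, ds$. By hypothesis (\ref{1205091}), this $a$ satisfies $\lambda_{2n-1} \prec \alpha \leq a \leq \beta$ with $\Vert a \Vert_{L^1(0,T)} \leq \Vert \beta \Vert_{L^1(0,T)} \leq \gamma_{1,n}$, so $a$ meets the condition (\ref{positivo}). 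Part (5) of Theorem~\ref{pt1} then gives $\lambda_{2n}(a) < 0 < \lambda_{2n+1}(a)$; in particular $0$ is not an eigenvalue of (\ref{p2bisbis}) for this $a$, so the only $T$-periodic solution of the linear equation for $w$ is $w \equiv 0$. Hence $u_1 = u_2$.

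For existence I would reformulate (\ref{m38}) as a fixed point problem. The natural choice is to pick the constant shift $\lambda_{2n-1}$ and rewrite the equation as
\begin{equation*}
u''(x) + \lambda_{2n-1} u(x) = -\big( f(x, u(x)) - \lambda_{2n-1} u(x) \big),
\end{equation*}
but since $\lambda_{2n-1}$ is itself an eigenvalue of the periodic linear operator $u \mapsto -u'' - \lambda_{2n-1} u$, that operator is not invertible. The cleaner route, following the Neumann-case argument of \cite{camovimia}, is to shift by a value strictly between the relevant eigenvalues: choose a constant $c$ with $\lambda_{2n-1} < c$ so that $-u'' - c u$ with periodic conditions is invertible on the appropriate subspace, and define the compact solution operator $K$ inverting the linear part. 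Writing (\ref{m38}) as $u = K N(u)$ where $N$ is the (Carath\'eodory, hence continuous and bounded on bounded sets) Nemytskii operator $N(u)(x) = -f(x,u(x)) + c\, u(x)$, Schauder's fixed point theorem yields a solution once an a~priori bound on solutions is established.

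The a~priori bound is where the linear estimates of Theorem~\ref{pt1} re-enter and is the main obstacle. I would obtain it by the standard continuation/homotopy argument: embed (\ref{m38}) in a family $u'' + t f(x,u) + (1-t) \ell(x) u = 0$ joining the nonlinear problem to a linear one whose coefficient still satisfies (\ref{positivo}), and use that the strict spectral separation $\lambda_{2n}(a) < 0 < \lambda_{2n+1}(a)$ holds \emph{uniformly} along the homotopy (because the bound $\Vert \beta \Vert_{L^1} \leq \gamma_{1,n}$ is preserved, and $\beta_{1,n}$ is not attained, so the inequality is never an equality). This uniform invertibility prevents solutions from escaping to infinity and gives the required bound via the Leray--Schauder degree or a topological transversality argument. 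The delicate points are verifying that the mean-value coefficient $a(x)$ stays in $\Lambda_n$ throughout the homotopy and that the non-attainment of $\beta_{1,n}$ indeed upgrades the weak inequality in (\ref{positivo}) to the strict spectral gap needed for uniform estimates; once these are in place, existence and uniqueness follow, completing the proof.
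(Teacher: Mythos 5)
Your uniqueness argument is correct and is precisely what the paper intends (the paper itself omits the details, saying only that the proof combines the linear results with Schauder's fixed point theorem along the lines of \cite{camovimia}): with $w=u_1-u_2$ and $a(x)=\int_0^1 f_u(x,su_1(x)+(1-s)u_2(x))\,ds$, hypothesis (\ref{1205091}) gives $\lambda_{2n-1}\prec\alpha\le a\le\beta$ a.e., hence $\Vert a\Vert_{L^1(0,T)}\le\Vert\beta\Vert_{L^1(0,T)}\le\gamma_{1,n}$, so $a$ satisfies (\ref{positivo}); part (5) of Theorem \ref{pt1} then shows $0$ is not a periodic eigenvalue for $a$, forcing $w\equiv 0$.

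The existence half, however, has a genuine gap at exactly the point you yourself call ``the main obstacle''. Your justification of the a priori bound --- that the spectral separation holds \emph{uniformly} along the homotopy ``because $\Vert\beta\Vert_{L^1}\le\gamma_{1,n}$ is preserved and $\beta_{1,n}$ is not attained'' --- is not a proof. Non-attainment of $\beta_{1,n}$ yields only the \emph{pointwise} statement that each individual coefficient $b$ with $\lambda_{2n-1}\prec b$ and $\Vert b\Vert_{L^1(0,T)}\le\gamma_{1,n}$ is nondegenerate; that is already the content of part (5) of Theorem \ref{pt1}. It gives no uniform lower bound on $\lambda_{2n+1}(b)$, nor on the norms of the inverse operators, over the whole family of coefficients produced by the homotopy, and without such uniformity no a priori bound on solutions follows. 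The missing ingredient --- the actual engine of the argument in \cite{camovimia} to which the paper appeals --- is a compactness argument: if solutions $u_k$ (of the homotopy problems, or of the linearized problems $u''+a_{v}u=-f(x,0)$ in the Schauder scheme) were unbounded, normalize $w_k=u_k/\Vert u_k\Vert_{\infty}$; the linearized coefficients $a_k$ all lie in the set $\{a:\ \alpha\le a\le\beta \mbox{ a.e.}\}$, which is convex and weak-$*$ compact in $L^\infty(0,T)$; extract $a_k\rightharpoonup a_0$ weak-$*$ (so that still $\lambda_{2n-1}\prec\alpha\le a_0\le\beta$ and $\Vert a_0\Vert_{L^1(0,T)}\le\gamma_{1,n}$), use the equation and Arzel\`a--Ascoli to get $w_k\to w_0$ in $C^1[0,T]$ along a subsequence, and pass to the limit to obtain a nontrivial $T$-periodic solution of $w_0''+a_0w_0=0$, contradicting part (5) of Theorem \ref{pt1} applied to $a_0$. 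This uniform-invertibility lemma is what makes Schauder (or your Leray--Schauder variant) applicable; you flag it as a ``delicate point'' but do not supply it. A further small slip: you require the coefficient to ``stay in $\Lambda_n$'' along the homotopy, which is backwards --- $\Lambda_n$ is by definition the set of coefficients for which (\ref{p1}) \emph{has} nontrivial solutions, so what you need is that the coefficients satisfy (\ref{positivo}) and hence remain \emph{outside} $\Lambda_n$.
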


\begin{remark}
By using an example in (\cite{lazerleach}), it may be seen that
the restriction
$$
\lambda_{2n-1} \prec \alpha (x) \leq f_{u}(x,u) $$ in
(\ref{1205091}) cannot be replaced (in nonlinear problems) by the
weaker condition
$$\lambda_{2n-1} <  f_{u}(x,u) \leq \beta (x) $$
\end{remark}

$ $

The previous Theorem uses $L^1$ Lyapunov inequality. The last part
of this section is dedicated to show how we can again use the idea
of the continuation method used in Theorem \ref{pt1} and in Lemma
\ref{2905097}, to obtain new results on the existence and
uniqueness of solutions for resonant problems like (\ref{m38}), by
using $L^\infty$ Lyapunov inequalities. In this sense, it is very
well known (see \cite{huaizhongyongjde94}) that if, in addition to
the first hypothesis of the previous Theorem, $f_u$ fulfils the
nonresonance condition
\begin{equation}\label{010609t2}
\exists \ n \in \natu \cup \{ 0 \}, \lambda, \mu \in \real: \
\frac{(2n)^2 \pi^2}{T^2} < \lambda \leq f_u (x,u) \leq \mu <
\frac{(2(n+1))^2 \pi^2}{T^2},
\end{equation}
then (\ref{m38}) has a unique solution. In particular, if $n =0,$
(\ref{010609t2}) becomes
\begin{equation}\label{010609t3}
\exists \ \lambda, \mu \in \real: \ 0 < \lambda \leq f_u (x,u)
\leq \mu < \frac{4 \pi^2}{T^2}.
\end{equation}

To obtain an strict generalization of these results, we return to
the linear periodic problem (\ref{p1}). Again, we take $T =\pi$
for simplicity.

\begin{theorem}\label{p2905097}
If function $a$ satisfies
\begin{equation}
\begin{array}{c}
a \in L_{\pi}(\real,\real),  0 \prec a(x), \\ \\  \exists \ x_0
\in (0,\pi):  \max \{ x_0^2 \Vert a \Vert_{L^\infty (0,x_0)}, \
(\pi-x_0)^2 \Vert a \Vert _{L^\infty(x_0,\pi)} \} < \pi^2
\end{array}
\end{equation}
then $\lambda_0 (a) < 0 < \lambda_1 (a).$
\end{theorem}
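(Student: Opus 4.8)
The plan is to deduce the statement from a comparison of $a$ with an explicit ``threshold'' potential whose first positive periodic eigenvalue is exactly $0$, followed by a strict monotonicity argument. The inequality $\lambda_0(a)<0$ is immediate: since $0\prec a$ and the principal periodic eigenvalue is strictly decreasing in the potential, $\lambda_0(a)<\lambda_0(0)=0$. Hence the entire content is the inequality $0<\lambda_1(a)$, and this is where I would concentrate the work.

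To prove it I would introduce the $\pi$-periodic reference potential $\bar a$, obtained by taking $\alpha=\pi$ in formula (\ref{2005095}), namely
\[
\bar a(x)=\tfrac{\pi^2}{x_0^2}\ \text{ on }\ (0,x_0),\qquad \bar a(x)=\tfrac{\pi^2}{(\pi-x_0)^2}\ \text{ on }\ (x_0,\pi),
\]
and I claim $\lambda_1(\bar a)=0$. To check this, solve $u''+\bar a\,u=0$ with $u(0)=0$: on $(0,x_0)$ one finds $u(x)=\sin(\pi x/x_0)$, and matching $u$ and $u'$ at $x_0$ forces $u(x)=-\tfrac{\pi-x_0}{x_0}\sin\!\big(\tfrac{\pi(x-x_0)}{\pi-x_0}\big)$ on $(x_0,\pi)$. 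A direct computation gives $u(0)=u(\pi)=0$ and $u'(0)=u'(\pi)=\pi/x_0$, so $u$ is a nontrivial $\pi$-periodic solution of $u''+\bar a\,u=0$; equivalently, each subinterval contributes the transfer matrix $-I$, so the monodromy over $[0,\pi]$ is the identity. Thus $\mu=0$ is a periodic eigenvalue of $\bar a$, and since $u$ has exactly two zeros in $[0,\pi)$ (at $0$ and $x_0$) it is an eigenfunction of the first positive periodic band edge; as the monodromy is the identity this edge is a double eigenvalue, whence $\lambda_1(\bar a)=\lambda_2(\bar a)=0$.

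Finally, the hypothesis $\max\{x_0^2\Vert a\Vert_{L^\infty(0,x_0)},(\pi-x_0)^2\Vert a\Vert_{L^\infty(x_0,\pi)}\}<\pi^2$ says precisely that $a(x)<\bar a(x)$ for a.e.\ $x\in(0,\pi)$. Strict monotonicity of $\lambda_1$ in the potential then yields $\lambda_1(a)>\lambda_1(\bar a)=0$, which with $\lambda_0(a)<0$ gives $\lambda_0(a)<0<\lambda_1(a)$. (One may equally phrase this as the continuation of Lemma \ref{2905097}: along $a_s=s\,a$, $s\in[0,1]$, one has $a_s<\bar a$ a.e., so $\lambda_1(a_s)>0$ never vanishes as it descends from $\lambda_1(0)=4$ to $\lambda_1(a)$.) The main obstacle, and the step I would write out most carefully, is the \emph{strictness} of the monotonicity at the double value $\lambda_1(\bar a)=\lambda_2(\bar a)=0$, which is what distinguishes this situation from Lemma \ref{2905097}, where the reference eigenvalue was strictly positive. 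Using the min-max formula $\lambda_1(b)=\min_{\dim V=2}\max_{0\neq u\in V}\big(\int u'^2-\int b\,u^2\big)/\int u^2$ over periodic subspaces $V$, equality $\lambda_1(a)=\lambda_1(\bar a)$ would force the corresponding eigenfunction to vanish on the positive-measure set where $a<\bar a$, impossible since nontrivial solutions of the equation have only isolated zeros; the secondary point requiring care is the identification, via the zero count, that the threshold eigenvalue is indeed $\lambda_1$ and not a higher one.
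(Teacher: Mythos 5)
You take a genuinely different route from the paper, and most of it is sound. The paper proves (Lemma \ref{p2905094}) by an explicit determinant computation that for \emph{every} $\alpha\in(0,\pi)$ the step potential $a_{\alpha,x_0}$ admits no nontrivial periodic solution, and then runs the continuation $g(\varepsilon)=\lambda_1(a_{\varepsilon\alpha,x_0})$ from $g(0)=\lambda_1(0)>0$; in this way only the \emph{non-strict} monotonicity $\lambda_1(a)\ge\lambda_1(a_{\alpha,x_0})$ is ever used. You instead work exactly at the threshold $\alpha=\pi$: your computation that the monodromy of $u''+\bar a u=0$ over $[0,\pi]$ equals $(-I)(-I)=I$, so that $0$ is a coexistence (double) periodic eigenvalue whose eigenfunctions have exactly two zeros in $[0,\pi)$, hence $\lambda_1(\bar a)=\lambda_2(\bar a)=0$, is correct, and it is consistent with the paper's determinant $\pi^2\alpha^2\sin^2\alpha/\bigl(x_0^2(\pi-x_0)^2\bigr)$, which vanishes precisely at $\alpha=\pi$. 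The claim $\lambda_0(a)<0$ and the reduction of the theorem to the strict inequality $\lambda_1(a)>\lambda_1(\bar a)$ are also fine.

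The gap is in the step you yourself flag as the crux: strictness of the monotonicity. In the min--max comparison, the relevant maximizer $u^*$ over a two-dimensional test space is a linear combination $c_0\phi_0+c_1\phi_1$ of eigenfunctions belonging to \emph{different} eigenvalues of the potential $a$; such a combination is not a solution of any single second-order equation, so ``nontrivial solutions have only isolated zeros'' does not apply to it, and ruling out that it vanishes on a set of positive measure requires a separate (density-point) argument, or else perturbation theory for possibly degenerate eigenvalues --- true, but real work, not a one-line citation. As written, the step fails. Fortunately, under the actual hypothesis no strictness lemma is needed: putting $\alpha^2$ equal to the maximum appearing in the hypothesis, you have the \emph{uniform} gap $a\le\bar a-\delta$ a.e., with $\delta=(\pi^2-\alpha^2)\min\{x_0^{-2},(\pi-x_0)^{-2}\}>0$, and since replacing a potential $b$ by $b+\delta$ lowers every periodic eigenvalue by exactly $\delta$, non-strict monotonicity already yields $\lambda_1(a)\ge\lambda_1(\bar a-\delta)=\lambda_1(\bar a)+\delta=\delta>0$. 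With that substitution your proof closes. Note finally that your parenthetical continuation along $a_s=sa$ is circular as phrased: the assertion that ``$\lambda_1(a_s)>0$ never vanishes'' is precisely the strict inequality being proved, whereas the paper's continuation works because its Lemma \ref{p2905094} supplies the nonvanishing of $g(\varepsilon)$ independently of any monotonicity.
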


\begin{proof}
The proof is based on the following Lemma, which points out an
important qualitative difference with respect to the antiperiodic
problem (see Lemma \ref{2905094}).

\begin{lemma}\label{p2905094}
Let $\alpha \in (0, \pi)$ and $ x_0 \in (0, \pi)$ be given. If
$a_{\alpha,x_0}$ is the function defined in (\ref{2005095}), the
periodic boundary value problem
\begin{equation}\label{p2905096}
u''(x) + a_{\alpha,x_0} (x) u(x) = 0, \ x \in (0,\pi), \ \ u(0) -
u(\pi) = u'(0) - u'(\pi) = 0
\end{equation}
has only the trivial solution.
\end{lemma}
\begin{proof}
As in the proof of the Lemma \ref{2905094}, taking into account
the formula (\ref{2005095}) for the function $a_{\alpha,x_0},$ any
solution of (\ref{p2905096}) must be of the form
\begin{equation}\label{p0106091}
u(x) = \left \{
\begin{array}{l}
A \sin \frac{\alpha x}{x_0} + B \cos \frac{\alpha x}{x_0}, \ 0
\leq x < x_0, \\ \\C \sin \frac{\alpha (x-\pi)}{\pi - x_0} + D
\cos \frac{\alpha (x-\pi)}{\pi - x_0},  \ x_0 < x \leq \pi
\end{array}
\right.
\end{equation}
Again, any solution of (\ref{p2905096}) is, in fact, a
$C^1[0,\pi]$ function. Therefore, it must satisfies the four
conditions:
$$
\begin{array}{c}
u(0) - u(\pi) = 0, \ u'(0) - u'(\pi) = 0, \\ \\ \lim_{x
\rightarrow x_0^-} \ u(x) = \lim_{x \rightarrow x_0^+} \ u(x), \
\lim_{x \rightarrow x_0^-} \ u'(x) = \lim_{x \rightarrow x_0^+} \
u'(x).
\end{array}
$$
These four conditions are equivalent to the system of equations
$$
\begin{array}{c}
B - D = 0, \\ \\
\frac{\alpha}{x_0} A - \frac{\alpha}{\pi - x_0} C = 0, \\ \\ A
\sin \alpha + B \cos \alpha + C \sin \alpha - D \cos \alpha = 0,
\\ \\
\frac{A\alpha}{x_0} \cos \alpha - \frac{B\alpha}{x_0} \sin \alpha
- \frac{C\alpha}{\pi -x_0} \cos \alpha - \frac{D\alpha}{\pi -x_0}
\sin \alpha = 0
\end{array}
$$
The determinant of the previous system is equal to
$$
\frac{\pi^2\alpha^2\sin^2 \alpha}{x_0 ^2 (\pi - x_0)^2}
$$
which is always different from zero.
\end{proof}
Now, to prove the Theorem \ref{p2905097}, let us define
$$
\alpha ^2 = \max \{ x_0^2 \Vert a \Vert_{L^\infty (0,x_0)},  \
(\pi-x_0)^2 \Vert a \Vert _{L^\infty(x_0,\pi)} \}
$$
Clearly $a(x) \leq a_{\alpha,x_0}(x), \ x \in (0,\pi),$ and
therefore $\lambda_{1}(a) \geq \lambda_1 (a_{\alpha,x_0}).$ On the
other hand, if we define the continuous function $g: [0,1]
\rightarrow \real,$ as $g(\varepsilon) = \lambda_1 (\varepsilon^2
a_{\alpha,x_0}),$ then $g(0) = \lambda_1 (0)
>0.$ Moreover, for each $\varepsilon \in (0,1],$ $\varepsilon^2
a_{\alpha,x_0} = a_{\varepsilon \alpha,x_0}$ and from the previous
Lemma, we deduce that the unique solution of the periodic problem
\begin{equation}\label{p0106093}
u''(x) + a_{\varepsilon \alpha,x_0} (x) u(x) = 0, \ x \in (0,\pi),
\ \ u(0) - u(\pi) = u'(0) - u'(\pi) = 0
\end{equation}
is the trivial one. Therefore $g(\varepsilon) \neq 0, \ \forall \
\varepsilon \in (0,1].$ Consequently $g(1) =\lambda_1
(a_{\alpha,x_0}) > 0$ and $\lambda_1 (a) >0.$
\end{proof}

\begin{remark}\label{p0106094}
If in the previous Theorem we select $x_0 \in (0,\pi/2),$ function
$a$ can satisfies $\Vert a \Vert_{L^\infty (0,x_0)} = \pi^2/x_0^2$
(which is a quantity greater than $ 4 $, see (\ref{010609t3}) for
$T=\pi$) as long as $\Vert a \Vert _{L^\infty (x_0,L)} <
\pi^2/(T-x_0)^2.$

By using the ideas of Theorem \ref{pt1} we can obtain analogous
results for the case of higher eigenvalues, i.e., in the case
where $\lambda_{2n-1} \prec a(x), \ n \in \natu.$
\end{remark}

The corresponding nonlinear Theorem to Theorem \ref{p2905097} is
the following one.

\begin{theorem}\label{tt0605092}
Let us consider (\ref{m38}) where:
\begin{enumerate}
\item $f$ and $f_{u}$ are Caratheodory function on $\real \times
\real$ and $f(x+T,u) = f(x,u), \ \forall \ (x,u) \in \real \times
\real.$%
\item There exist functions $\alpha, \ \beta \in L^\infty (0,T)$
satisfying
\begin{equation}\label{21205091} 0 \prec \alpha (x) \leq f_{u}(x,u) \leq
\beta (x).
\end{equation}
\item \begin{equation} \exists \ x_0 \in (0,\pi):  \max \{ x_0^2
\Vert \beta \Vert_{L^\infty (0,x_0)},  \ (\pi-x_0)^2 \Vert \beta
\Vert _{L^\infty(x_0,\pi)} \} < \pi^2
\end{equation}
\end{enumerate}
Then, problem (\ref{m38}) has a unique solution.
\end{theorem}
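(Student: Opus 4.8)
The plan is to reduce the nonlinear resonant problem (\ref{m38}) to a linear setting by exploiting the mean value theorem for $f_u$, exactly as in the case of Theorem \ref{t0605092} and its analogue \cite{camovimia}. Concretely, suppose $u_1$ and $u_2$ are two solutions of (\ref{m38}). Setting $w = u_1 - u_2$, the function $w$ is a $T$-periodic solution of the linear equation $w''(x) + a(x)w(x) = 0$, where $a(x) = \int_0^1 f_u(x, t u_1(x) + (1-t) u_2(x))\, dt$ is a suitable ``averaged'' coefficient. By hypothesis (\ref{21205091}), this coefficient satisfies $0 \prec \alpha(x) \leq a(x) \leq \beta(x)$ pointwise, and by hypothesis (3) of the theorem the upper bound $\beta$ obeys the $L^\infty$-type smallness condition of Theorem \ref{p2905097}. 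Hence $a$ itself satisfies $0 \prec a$ together with $\max\{ x_0^2 \Vert a \Vert_{L^\infty(0,x_0)}, (\pi-x_0)^2 \Vert a \Vert_{L^\infty(x_0,\pi)}\} < \pi^2$ for the same $x_0$. By Theorem \ref{p2905097} we conclude $\lambda_0(a) < 0 < \lambda_1(a)$, so in particular $0$ is not a periodic eigenvalue of $a$, forcing $w \equiv 0$ and giving uniqueness.

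For existence I would combine this a priori linear control with a fixed point argument of Schauder type, precisely the scheme the authors reference from \cite{camovimia}. First I would rewrite (\ref{m38}) as a fixed point equation: pick a constant $c$ with, say, $0 < c < \pi^2/x_0^2$ (or more carefully a function interpolating between $\alpha$ and $\beta$), and recast the problem as $u = K_c \big( f(\cdot, u) - c\, u \big)$, where $K_c$ is the solution operator of the periodic problem $v'' + c v = \text{(forcing)}$, which is well defined and compact from $C[0,\pi]$ to $C^1[0,\pi]$ precisely because $0 < c < \pi^2$ means $c$ avoids the periodic spectrum. The key is that the estimate $\lambda_0(a) < 0 < \lambda_1(a)$ obtained above, holding uniformly along a homotopy, yields an a priori bound on all possible solutions of the homotopy family $u = t\, K_c(f(\cdot,u) - cu)$, $t \in [0,1]$, so Schauder's (or Leray--Schauder's) theorem applies to produce a solution.

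The main obstacle, and the reason the authors say they omit the details, is the uniform a priori estimate needed for existence rather than the uniqueness half, which is essentially immediate from Theorem \ref{p2905097}. One must verify that the linearization-along-a-homotopy coefficients continue to satisfy the hypotheses of Theorem \ref{p2905097} and, crucially, that this prevents solutions of the homotopy from escaping to infinity. This is where the positivity $0 \prec \alpha$ together with the two-sided bound $\alpha \le f_u \le \beta$ is used in full strength: it guarantees that the difference of any two solutions, and more generally the relevant linearized operators, stay strictly off the periodic spectrum, giving invertibility with a uniform norm bound. Translating this spectral gap into a genuine $C^1$ a priori bound, uniform in the homotopy parameter, is the technical heart of the argument; the continuation method of Lemma \ref{2905097} and the last part of Theorem \ref{pt1} provides exactly the mechanism for keeping $0$ strictly between consecutive eigenvalues throughout the deformation.
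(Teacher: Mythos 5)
Your proposal follows precisely the route the paper intends but never writes down: the paper offers no proof of this theorem at all, saying only (for the companion Theorem \ref{t0605092}) that one combines the linear results --- here Theorem \ref{p2905097} --- with Schauder's fixed point theorem in the manner of \cite{camovimia}. Your uniqueness half (setting $w=u_1-u_2$ and $a(x)=\int_0^1 f_u\bigl(x,tu_1(x)+(1-t)u_2(x)\bigr)\,dt$, so that $0\prec\alpha\le a\le\beta$ places $a$ under the hypotheses of Theorem \ref{p2905097}, whence $0$ is not a periodic eigenvalue of $a$ and $w\equiv 0$) is complete and correct, and your existence half is the standard Leray--Schauder implementation of the same spectral-gap idea.

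One technical slip should be repaired. For $T=\pi$ the periodic eigenvalues are $0,4,4,16,16,\dots$, so your assertion that $0<c<\pi^2$ keeps $c$ off the periodic spectrum is false ($c=4$ is an eigenvalue), and the choice $c<\pi^2/x_0^2$ by itself does not control the homotopy coefficients on $(x_0,\pi)$. Taking instead $0<c<\min\{\pi^2/x_0^2,\ \pi^2/(\pi-x_0)^2\}$ fixes both points at once: since $\max\{x_0,\pi-x_0\}\ge\pi/2$ this forces $c<4$, so $K_c$ exists and is compact; moreover every coefficient $(1-t)c+t\int_0^1 f_u(x,su(x))\,ds$ arising along the homotopy is then bounded above by $\max\{c,\beta(x)\}$ and below by $(1-t)c+t\alpha(x)$, hence stays in the admissible class of Theorem \ref{p2905097} for the same $x_0$, which is exactly what the uniform a priori bound (via the weak-$*$ compactness argument on coefficients that you allude to) requires.
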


\end{document}